\newtheorem{theorem}{Theorem}[section]
\newtheorem{definition}[theorem]{Definition}
\newtheorem{proposition}[theorem]{Proposition}
\newtheorem{lemma}[theorem]{Lemma}
\newtheorem{remark}[theorem]{Remark}
\newtheorem{example}[theorem]{Example}
\newtheorem{examples}[theorem]{Examples}
\newtheorem{open question}[theorem]{Open Question}
\newtheorem{c/p}[theorem]{Conjecture/Proposition}
\def\vint{\mathop{\mathchoice%
 {\setbox0\hbox{$\displaystyle\intop$}\kern 0.22\wd0%
 \vcenter{\hrule width 0.6\wd0}\kern -0.82\wd0}%
 {\setbox0\hbox{$\textstyle\intop$}\kern 0.2\wd0%
 \vcenter{\hrule width 0.6\wd0}\kern -0.8\wd0}%
 {\setbox0\hbox{$\scriptstyle\intop$}\kern 0.2\wd0%
 \vcenter{\hrule width 0.6\wd0}\kern -0.8\wd0}%
 {\setbox0\hbox{$\scriptscriptstyle\intop$}\kern 0.2\wd0%
 \vcenter{\hrule width 0.6\wd0}\kern -0.8\wd0}}%
 \mathopen{}\int}
\title{Fractional Gaussian fields  on the Sierpinski gasket and related fractals}
\author{Fabrice Baudoin}
\address{F.B: Department of Mathematics, University of Connecticut, Storrs, CT 06269}
\author{C\'eline Lacaux}
\address{C.L.: Avignon Universit\'e, LMA EA 2151, 84000 Avignon, France}
\thanks{F. Baudoin is supported in part by NSF Grant DMS-1901315}
\begin{document}

\maketitle

\begin{abstract}
We define and study a fractional Gaussian field $X$ with Hurst parameter  $H$ on the Sierpi\'nski gasket $K$ equipped with its Hausdorff measure $\mu$.  It appears as a  solution, in a weak sense,  of the equation $(-\Delta)^s X =W$ where $W$ is a Gaussian white noise on $L_0^2(K,\mu)$, $\Delta$ the Laplacian on $K$ and $s= \frac{d_h+2H}{2d_w}$, where $d_h$ is the Hausdorff dimension of $K$ and $d_w$ its walk dimension. The construction of those fields is then extended to other fractals including the Sierpi\'nski carpet. 
\end{abstract}

\tableofcontents

\section{Introduction}

For $s \ge 0$, consider in $\mathbb{R}^n$ the Gaussian random measure
\begin{equation}\label{intro X}
X=(-\Delta)^{-s} W,
\end{equation}
where $W$ is a white noise (i.e. a Gaussian random measure with intensity the Lebesgue measure) and $\Delta$ the Laplace operator on $\mathbb{R}^n$. The expression \eqref{intro X} has of course to be understood in a distributional sense (see \cite{MR3466837} for the details) and means that for every $f$ in the Schwartz space $\mathcal{S}(\mathbb{R}^n)$ of smooth and rapidly decreasing functions one has
\[
\int_{\mathbb{R}^n} (-\Delta )^s f (x) X (dx)=\int_{\mathbb{R}^n}  f (x) W (dx).
\]

This class of Gaussian measures includes the following popular examples which are thoroughly presented in the survey paper \cite{MR3466837}: white noise ($s=0$), Gaussian free field ($s=1/2$),  log-correlated Gaussian field ($s=\frac{n}{4}$) and fractional random measures ($\frac{n}{4}< s<\frac{n}{4}+\frac{1}{2}$). Actually, in the range $\frac{n}{4}< s<\frac{n}{4}+\frac{1}{2}$,  the Gaussian random measure $X$ admits a density with respect to the Lebesgue measure which is the fractional Brownian motion indexed by $\mathbb{R}^n$. The Hurst parameter $H$ of this fractional Brownian motion  is given by $H=2 s -\frac{n}{2}$.
In the present paper, we are interested in generalizing those fields on fractals in the range corresponding to the fractional Brownian motions. For simplicity of the presentation we carry out explicitly  and in details the analysis in the case of the Sierpi\'nski gasket but as we shall discuss in the last section of the paper, our analysis extends to more general fractals. The main result is the following:

\begin{theorem}\label{theo intro}
Let $K$ be the Sierpi\'nski gasket with normalized self-similar Hausdorff measure $\mu$ and Laplacian $\Delta$. Denote $d_h$ the Hausdorff dimension of $K$ and $d_w$ its walk dimension.  Let $W$ be a white noise on $L^2_0(K,\mu)$. Then,  if $\frac{d_h}{2d_w} < s < 1-\frac{d_h}{2d_w}$, there exists a Gaussian random field $(X(x))_{x \in K}$   which is  H\"older continuous  with exponent $H^{-}$ where
\[
H= sd_w -\frac{d_h}{2},
\]
and such that for every $f$ which is in the $L^2_0$ domain of the operator $(-\Delta )^s  $
\[
\int_{K} (-\Delta )^s f (x) X(x)  d\mu(x)=\int_{K}  f (x) W (dx).
\]
\end{theorem}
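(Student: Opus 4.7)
The natural starting point is the spectral construction. Since $K$ is compact, the Dirichlet (or Neumann) Laplacian $-\Delta$ restricted to $L^2_0(K,\mu)$ admits an orthonormal basis of eigenfunctions $(\varphi_n)_{n \geq 1}$ with positive eigenvalues $\lambda_n \to \infty$. Let $(\xi_n)_{n\geq 1}$ be i.i.d.\ standard Gaussian variables realizing the white noise via $W=\sum_n \xi_n \varphi_n$. I would define
\[
X(x) \;=\; \sum_{n\geq 1} \lambda_n^{-s}\,\xi_n\,\varphi_n(x),
\]
so that at a formal level $(-\Delta)^s X = W$. The distributional identity $\int_K (-\Delta)^s f\cdot X\,d\mu = \int_K f\,dW$ would then follow by expanding $f=\sum_n f_n\varphi_n$ in the $L^2_0$-domain of $(-\Delta)^s$, applying Parseval, and using the $L^2(\Omega)$-convergence of the partial sums.

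The crux is therefore to establish (i) the pointwise $L^2(\Omega)$-convergence of the series, and (ii) the existence of a H\"older continuous modification with exponent arbitrarily close to $H=sd_w-d_h/2$. For (i) and for computing the covariance, I would pass from the spectral series to a heat-kernel representation: if $p_t^0(x,y)$ denotes the heat kernel on $L^2_0(K,\mu)$, then by the spectral theorem and Fubini
\[
\mathbb{E}[X(x)X(y)] \;=\; \sum_{n\geq 1} \lambda_n^{-2s}\varphi_n(x)\varphi_n(y) \;=\; \frac{1}{\Gamma(2s)}\int_0^\infty t^{2s-1}p_t^0(x,y)\,dt.
\]
The sub-Gaussian upper bound $p_t^0(x,x)\le Ct^{-d_h/d_w}$ (valid for small $t$) makes the diagonal integral convergent precisely under $2s>d_h/d_w$, i.e.\ $s>d_h/(2d_w)$, which matches the lower range assumed in the theorem.

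For (ii), I would estimate the canonical metric
\[
d_X(x,y)^2 \;:=\; \mathbb{E}\bigl[(X(x)-X(y))^2\bigr] \;=\; \frac{1}{\Gamma(2s)}\int_0^\infty t^{2s-1}\bigl(p_t^0(x,x)+p_t^0(y,y)-2p_t^0(x,y)\bigr)\,dt.
\]
Splitting the integral at $t_0=d(x,y)^{d_w}$: for $t\ge t_0$ the sub-Gaussian estimates together with the space H\"older continuity of $p_t$ give a uniformly bounded contribution once $s<1-d_h/(2d_w)$; for $t\le t_0$ the diagonal upper bound alone (after discarding $p_t^0(x,y)\ge0$) yields a contribution of order $t_0^{2s-d_h/d_w}=d(x,y)^{2sd_w-d_h}=d(x,y)^{2H}$. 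Consequently $d_X(x,y)\le C\,d(x,y)^H$, and since $X$ is centred Gaussian, the Kolmogorov–\v{C}entsov theorem (or Dudley's entropy bound, using the Ahlfors regularity of $(K,d,\mu)$) produces a modification that is H\"older continuous with any exponent strictly less than $H$.

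The main obstacle is the H\"older estimate in step~(ii): handling the increment $p_t^0(x,x)+p_t^0(y,y)-2p_t^0(x,y)$ requires quantitative space-regularity of the heat kernel on the gasket, not merely the diagonal bound. I would either invoke known H\"older continuity estimates for $p_t$ on nested fractals and insert them in the split integral, or alternatively bypass them by applying the elementary inequality $p_t^0(x,x)+p_t^0(y,y)-2p_t^0(x,y)\le 2\min\{p_t^0(x,x)+p_t^0(y,y),\, (\text{off-diagonal bound})\}$ and optimising the splitting time. The constraint $s<1-d_h/(2d_w)$ is precisely what guarantees that the large-$t$ contribution is controlled by the diagonal short-time behaviour, so the two endpoints of the admissible range of $s$ have transparent heat-kernel meanings.
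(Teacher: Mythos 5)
Your plan is correct and, at its analytic core, coincides with the paper's proof: both hinge on splitting the time integral defining the Riesz kernel at $t_0=d(x,y)^{d_w}$ and playing the on-diagonal bound $p_t(x,x)\le Ct^{-d_h/d_w}$ against the spatial H\"older estimate $|P_tg(x)-P_tg(y)|\le Cd(x,y)^{d_w-d_h}t^{-(1-d_h/d_w)}\|g\|_{L^\infty}$ (the one genuinely nontrivial external input, taken from the cited semigroup literature), which is exactly the obstacle you correctly single out. The differences are organizational. First, the paper realizes $X$ as the white-noise integral $X(x)=\int_K G_s(x,z)\,W(dz)$ and estimates $\|G_s(x,\cdot)-G_s(y,\cdot)\|_{L^2}$ by duality, i.e.\ by bounding $\int_0^\infty t^{s-1}|P_tf(x)-P_tf(y)|\,dt$ uniformly over $\|f\|_{L^2}\le 1$; your route through the second difference $p_t^0(x,x)+p_t^0(y,y)-2p_t^0(x,y)=\|p_{t/2}(x,\cdot)-p_{t/2}(y,\cdot)\|_{L^2}^2$ is equivalent, provided you use the full dualized bound $Cd(x,y)^{2(d_w-d_h)}t^{-2+d_h/d_w}$ on the large-$t$ range rather than a one-sided mixed bound. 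On that point, your phrase ``uniformly bounded contribution'' is too weak: the $t\ge t_0$ piece must itself be $O(d(x,y)^{2H})$, which it is after the optimization you describe, with $s<1-\tfrac{d_h}{2d_w}$ ensuring convergence at $t\to\infty$; also $p_t^0(x,y)=p_t(x,y)-1$ is not nonnegative, though $p_t\ge0$ makes the resulting constant term harmless. Second, for the modification the paper uses a Garsia--Rodemich--Rumsey inequality on fractals (Barlow--Perkins), which yields the sharper modulus $d(x,y)^H\sqrt{|\ln d(x,y)|}$; your Kolmogorov--\v{C}entsov/Dudley alternative is legitimate on a compact $d_h$-Ahlfors regular space (Dudley's entropy bound with $N(K,d,\varepsilon)\asymp\varepsilon^{-d_h}$ gives the same log-corrected modulus), but you should cite a metric-space version rather than the classical cube statement. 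The distributional identity via Parseval is handled the same way in both arguments.
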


By $\gamma^{-}$-H\"older we mean  $(\gamma-\varepsilon)$-H\"older continuous for $\varepsilon>0$. In the range $\frac{d_h}{2d_w} < s < 1-\frac{d_h}{2d_w}$ it is therefore natural to call $X$ a fractional Brownian motion indexed by $K$ and with Hurst parameter $H= sd_W -\frac{d_h}{2}$. For the Sierpi\'nski gasket the explicit values $d_h=\frac{\ln 3}{\ln 2}$ and  $d_w=\frac{\ln 5}{\ln 2}$ are known. The borderline case $s =\frac{d_h}{2d_w}$ would correspond to the case of a log-correlated field on the gasket. Such field can not be defined pointwise but only in a distributional sense. We let its study for possible later research.

Since their introduction in \cite{Kolmogorov} and \cite{mandelbrot}, fractional Brownian motions and fields have attracted a lot of interest, both from theoretical or more applied viewpoints, see \cite{MR3466837, taqqu,cohenistas2013} and the references therein. Following the definition by P. L\'evy \cite{levy} of the Brownian field on the sphere,  J.  Istas defined in \cite{Istas1, istas2} the fractional Brownian field on  manifolds or more generally metric spaces, as a Gaussian field whose covariance is given by
\begin{align}\label{cov intro}
\frac{1}{2} ( d(x,o)^{2H}+d(y,o)^{2H}-d(x,y)^{2H}),
\end{align}
where $o$ is a fixed point in the space and $d$ the distance. Applying this definition for the Euclidean distance on the Sierpi\'nski gasket, which is a compact set isometrically embedded in the plane, is not interesting since it simply yields a field which is the restriction to the gasket of the usual fractional Brownian field on the plane. It would be more natural to use for the distance $d$ the so-called harmonic shortest path metric which is for instance defined by J. Kigami in \cite{kigami4}. For this choice of the distance, it is not clear to the authors what is the exact range of the parameters $H$ for which the function \eqref{cov intro} is indeed a covariance function. Our construction of the fractional Brownian field, which is instead based on the study of fractional Riesz kernels is similar to the construction of fractional fields on manifolds by Z. Gelbaum  \cite{Gelbaum} and adopt the viewpoint about fractional fields which is given in \cite{MR3466837}. One advantage of working with fractional fields constructed from the white noise using fractional Riesz kernels is the availability of all the harmonic analysis tools that can be developed on the underlying space. In the case of fractals like the  Sierpi\'nski gasket such tools have extensively been developed in the last few decades; we mention for instance the references \cite{Kigami3, Shima, Strichartz1, Strichartz2} and the book \cite{kigami}.\\

Our paper is organized as follows.  In Section 2, we study on the Sierpi\'nski gasket the properties of the operator  $(-\Delta)^{-s}$ where $\Delta$ is the Laplacian on the gasket, as defined in \cite{Kigami3}. One of the main results of the section is Theorem \ref{Holder Riesz} that implies   that for $s \in \left( \frac{d_h}{2d_w} , 1- \frac{d_h}{2d_w}\right)$ and $f \in L^2(K,\mu)$ one  can pointwisely define $(-\Delta)^{-s} f$ and that one has for every $x,y \in K$,
\[
| (-\Delta)^{-s} f (x) - (-\Delta)^{-s} f (y) | \le  Cd(x,y)^{sd_w-\frac{d_h}{2}} \| f \|_{L^2(K,\mu)}.
\]
In particular, in the range  $s \in \left( \frac{d_h}{2d_w} , 1- \frac{d_h}{2d_w}\right)$, the operator $(-\Delta)^{-s}$ maps  $L^2(K,\mu)$ into the space of bounded and $\left(sd_w-\frac{d_h}{2}\right)$-H\"older continuous functions. This regularization property allows us to define and study in Section 3, the fractional Brownian field as $X:=(-\Delta)^{-s} W$ where $W$ is a white noise and then to prove Theorem~\ref{theo intro}. Tools to the study of the regularity of Gausian fields are widely available in the literature, see for instance \cite{MR346884, MR1462329,kono1970,MR2531090} and references therein. In our situation, a key step is Theorem  \ref{regularite}  where we prove, using a  Garsia-Rodemich-Rumsey inequality for fractals (see lemma 6.1 of \cite{BP88}), that there exists a modification $X^*$ of $X$ such that  
	$$
	\lim_{\delta \to 0}\underset{\underset{x,y\in K}{\scriptsize d(x,y)}\le \delta}{\sup}\  \frac{\left| X^*(x)-X^*(y)\right|}{d(x,y)^H \sqrt{\left|\ln d(x,y)\right|}} <+\infty	$$ 

with $H= sd_w -\frac{d_h}{2}$. Finally, at the end of the section, we prove that the fractional field we constructed is invariant by the symmetries of the gasket and moreover satisfies a natural scaling property related to the self-similar structure of the gasket. In the final Section 4, we extend our results to the context of fractional spaces, which are a class of Dirichlet spaces introduced by Barlow in  \cite{Bar98}.

\section{Fractional Riesz kernels on the Sierpi\'nski gasket}

\subsection{Definition of the gasket}

We first recall the definition of the Sierpi\'nski gasket. For further details we refer to the book by Kigami \cite{kigami}.
In $\mathbb{R}^2 \simeq \mathbb C$, consider the triangle with vertices $q_0=0$, $q_1=1$ and $q_2=e^{\frac{i\pi}{3}}$. For $i=1,2,3$, consider the map
\[
F_i(z)=\frac{1}{2}(z-q_i)+q_i.
\]

\begin{definition}
The Sierpi\'nski gasket is the unique non-empty compact set $K \subset \mathbb C$ such that 
\[
K=\bigcup_{i=1}^3 F_i (K).
\]
\end{definition}

The Hausdorff dimension of $K$ with respect to the Euclidean metric (denoted $d(x,y)=| x - y |$ in this paper) is given by $d_h=\frac{\ln 3}{\ln 2}$. A (normalized) Hausdorff measure on $K$ is given by the Borel measure $\mu$ on $K$ such that for every $i_1, \cdots, i_n \in \{ 1,2,3 \} $,
\[
\mu \left(  F_{i_1} \circ \cdots \circ F_{i_n}  (K)\right)=3^{-n}.
\]

\begin{figure}[htb]\centering
 	\includegraphics[trim={60 10 180 60},height=0.25\textwidth]{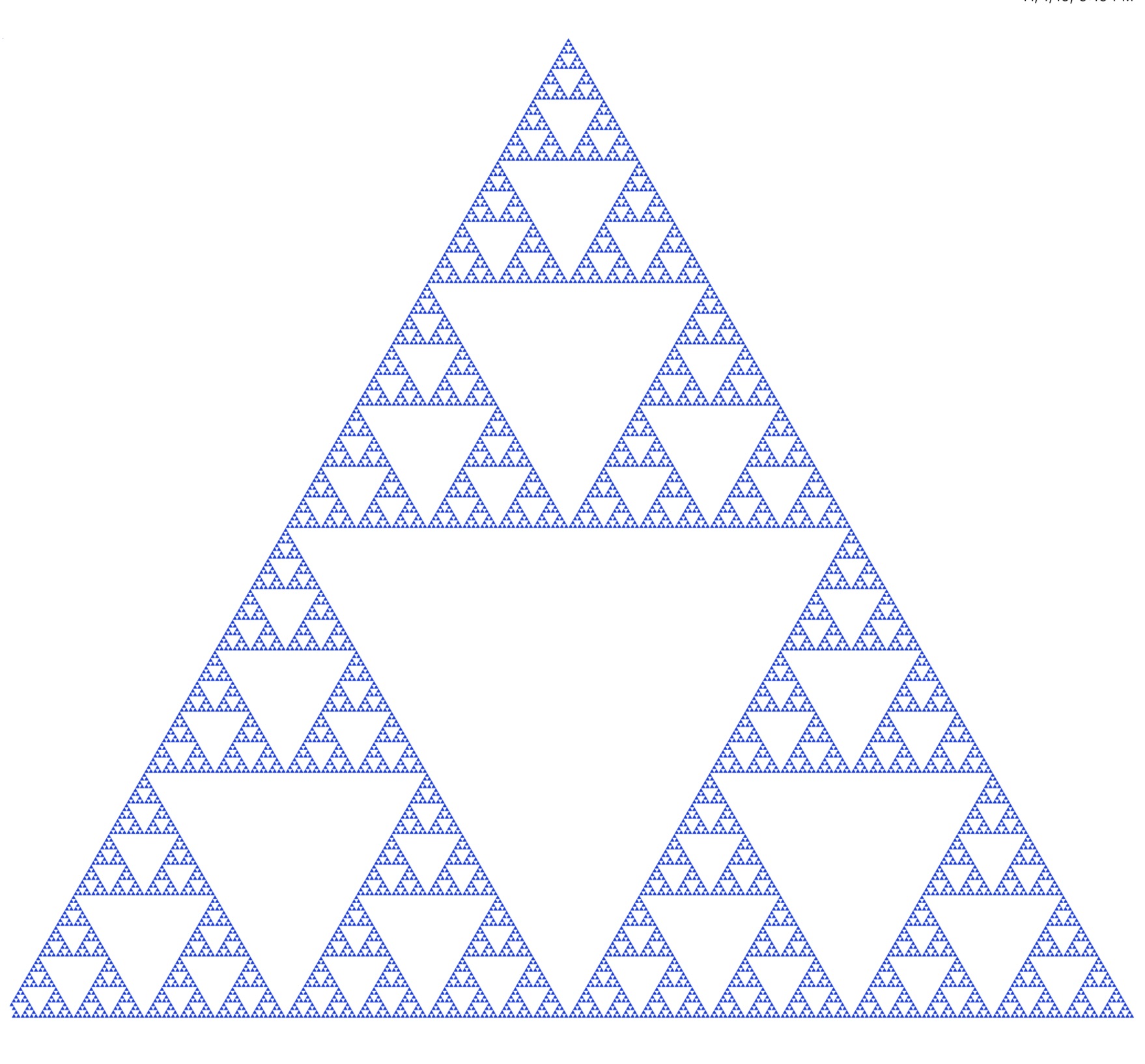}
 	\caption{Sierpi\'nski gasket.} \label{fig-SG}
 \end{figure}

This measure $\mu$ is $d_h$-Ahlfors regular, i.e. there exist constants $c,C>0$ such that for every $x \in K$ and $r \in [0, \mathrm{diam} (K) ]$,
\begin{equation}
\label{Ahlfors}
c r^{d_h} \le \mu (B(x,r)) \le C r^{d_h},
\end{equation}

where we denote by $\mathrm{diam} (K)$ the diameter of $K$ and by $B(x,r)$ the metric ball with center $x$ and radius~$r$.

\subsection{Canonical Dirichlet form and heat kernel estimates}
One can construct a canonical Dirichlet form and associated Laplacian $\Delta$ on the Sierpi\'nski gasket by using a graph approximation of the gasket. Denote $V_0= \{ q_0,q_1,q_2 \}$, $V_n=\cup_{i_1,\cdots, i_n} F_{i_1} \circ \cdots \circ F_{i_n} (V_0)  $ and
\[
V_* =\bigcup_{ n \ge 0} V_n
\]

For $f \in \mathbb{R}^{V_n}$, one can consider the quadratic form
\[
\mathcal{E}_n (f,f)=\frac{1}{2} \left( \frac{5}{3} \right)^n \sum_{i_1,\cdots, i_n} \sum_{x,y \in V_0} \left( f(F_{i_1} \circ \cdots \circ F_{i_n} (x)) - f(F_{i_1} \circ \cdots \circ F_{i_n} (y)) \right)^2
\]

Define then
\[
\mathcal{F}_* = \left\{ f \in \mathbb{R}^{V_*}, \lim_{n \to \infty}   \mathcal{E}_n (f,f) <+\infty \right\} 
\]
and for $f \in \mathcal{F}$,
\begin{align}\label{dirichlet limit}
\mathcal{E} (f,f) =\lim_{n \to \infty}   \mathcal{E}_n (f,f).
\end{align}

It is possible to prove that any function $f \in \mathcal{F}_*$ can uniquely be extended into a continuous function defined on the whole $K$. We denote by $\mathcal F$ the set of such extensions. One has then the following theorem, see the book by Kigami \cite{kigami}.

\begin{theorem}
$(\mathcal{E},\mathcal{F})$ is a local regular Dirichlet form on $L^2(K,\mu)$ with the following property: for every $f,g \in \mathcal F$
\[
\mathcal{E} (f,g)=\frac{5}{3} \sum_{i=1,2,3} \mathcal{E} (f \circ F_i , g \circ  F_i ).
\]
\end{theorem}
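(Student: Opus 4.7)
The plan is to establish all three claims---self-similarity, the Dirichlet-form axioms, and regularity together with locality---by first verifying the corresponding statements at the level of the discrete approximants $\mathcal E_n$ and then passing to the limit. The self-similarity identity for $\mathcal E_n$ is immediate from its definition: separating the outer sum by the first letter $i_1$ and re-indexing the remaining word yields
$$\mathcal E_n(f,f) = \frac{5}{3}\sum_{i=1}^{3}\mathcal E_{n-1}(f\circ F_i,\, f\circ F_i),$$
and the claimed identity for the limit $\mathcal E$ follows by sending $n\to\infty$ in (\ref{dirichlet limit}), with cross-terms $\mathcal E(f,g)$ recovered by polarization.

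The heart of the argument is to check that the constant $5/3$ is exactly the renormalization factor for which the trace (Schur complement) of $\mathcal E_{n+1}$ on $V_n$ coincides with $\mathcal E_n$. This reduces to the classical electrical-network computation on one triangle cell: among all extensions to $V_1$ of a prescribed boundary datum on $V_0$, the minimizer of $\mathcal E_1$ realizes exactly $\mathcal E_0$ with matched constants, and the identity iterates over scales. Consequently $n\mapsto \mathcal E_n(f,f)$ is nondecreasing, so the limit in (\ref{dirichlet limit}) exists in $[0,+\infty]$. Combined with the effective-resistance H\"older estimate, this shows that every $f\in \mathcal F_*$ extends uniquely to a continuous function on $K$, giving a well-defined space $\mathcal F\subset C(K)$ on which $\mathcal E$ is a nonnegative symmetric bilinear form.

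The Dirichlet-form axioms are inherited from the finite levels. Bilinearity and positivity are clear, and the Markov property $\mathcal E(\phi\circ f,\phi\circ f)\le \mathcal E(f,f)$ for $1$-Lipschitz contractions $\phi$ with $\phi(0)=0$ holds term by term in each $\mathcal E_n$ and persists in the limit. Closedness of $(\mathcal E,\mathcal F)$ on $L^2(K,\mu)$ exploits that $\mathcal E$ is the pointwise supremum of the lower-semicontinuous forms $\mathcal E_n$: a Cauchy sequence for $\mathcal E_1^{1/2}$ converges pointwise on the countable dense set $V_*$, then uniformly on $K$ via the resistance H\"older estimate, and its limit belongs to $\mathcal F$.

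For regularity, the piecewise harmonic functions attached to the level-$n$ subdivisions lie in $\mathcal F\cap C(K)$, separate points, and form an algebra, so Stone-Weierstrass gives uniform density in $C(K)$; harmonic approximation also yields $\mathcal E_1$-density in $\mathcal F$. Locality follows by a support argument: if $f,g\in\mathcal F$ have disjoint supports then for $n$ large enough no cell $F_{i_1}\circ \cdots \circ F_{i_n}(K)$ meets both supports, so on every edge of the level-$n$ graph at least one of $f,g$ vanishes at both endpoints; hence $\mathcal E_n(f,g)=0$ eventually and $\mathcal E(f,g)=0$. The principal obstacle is the Schur-complement identity behind the constant $5/3$: without it, the monotonicity of $\mathcal E_n$ and the entire Hilbert-space structure of $(\mathcal F,\mathcal E_1)$ collapse, and the limit in (\ref{dirichlet limit}) need not even be finite on nontrivial functions.
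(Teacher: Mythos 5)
The paper does not actually prove this theorem; it is the classical construction of the standard Dirichlet form on the gasket and is cited to Kigami's book. Your outline reproduces exactly that standard argument --- decimation invariance of the renormalized discrete energies via the Schur-complement ($\Delta$--Y) computation behind the factor $5/3$, monotonicity of $n\mapsto\mathcal E_n(f,f)$, continuous extension through the resistance H\"older estimate, termwise Markov property, closedness from lower semicontinuity of the supremum, and locality from the eventual separation of cells --- and it is essentially correct. One small repair: the piecewise harmonic functions do \emph{not} form an algebra (a product of harmonic functions is not harmonic), so Stone--Weierstrass should be applied to $\mathcal F$ itself (or to the subalgebra they generate), using the fact that for a resistance form the bounded finite-energy functions are closed under products; since the piecewise harmonic functions already separate points and contain the constants, the uniform density of $\mathcal F$ in $C(K)$ follows, and $\mathcal E_1$-density in $\mathcal F$ is then obtained from harmonic interpolation on $V_n$ as you indicate.
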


The semigroup $\{P_t\}$ associated with $\mathcal{E}$ is stochastically complete (i.e. $P_t 1=1$) and, from \cite{BP88},   has a jointly continuous heat kernel $p_t(x,y)$ with respect to the reference measure $\mu$ satisfying, for some $c_{1},c_{2}, c_3, c_4 \in(0,\infty)$,
 \begin{equation}\label{eq:subGauss-upper}
 c_{1}t^{-d_{h}/d_{w}}\exp\biggl(-c_{2}\Bigl(\frac{d(x,y)^{d_{w}}}{t}\Bigr)^{\frac{1}{d_{w}-1}}\biggr) 
 \le p_{t}(x,y)\leq c_{3}t^{-d_{h}/d_{w}}\exp\biggl(-c_{4}\Bigl(\frac{d(x,y)^{d_{w}}}{t}\Bigr)^{\frac{1}{d_{w}-1}}\biggr)
 \end{equation}
 for every \ $(x,y)\in K \times K$ and $t\in\bigl(0,1)$. 
 
 The exact values of $c_1,c_2,c_3,c_4$ are irrelevant in our analysis.   As above, the parameter $d_h=\frac{\ln 3}{\ln 2}$ is the Hausdorff dimension. The parameter $d_w=\frac{\ln 5}{\ln 2}$  is called the walk dimension. Since $d_w > 2$, one speaks of sub-Gaussian  heat kernel  estimates. 

\subsection{Fractional Riesz kernels}

Let $\Delta$ denotes the generator of the Dirichlet form $\mathcal E$, i.e. $\Delta$ is the Laplacian on $K$.
Our goal in this section is to study the operators $(-\Delta)^{-s}$, $s >0$, defined on $L^2_0(K,\mu)$ where 
\[
L^2_0(K,\mu)=\left\{ f \in L^2(K,\mu), \int_K f d\mu=0 \right\}.
\]%\textcolor{red}{\fbox{Peut-on ajouter une référence pour le developpement de $p_t$?}}

From \cite{kigami}, the heat kernel $p_t(x,y)$ admits a uniformly convergent spectral expansion:
\begin{align}\label{spectral}
p_t(x,y)=1+\sum_{j=1}^{+\infty} e^{-\lambda_j t} \Phi_j(x) \Phi_j(y)
\end{align}
where $0<\lambda_1\le \lambda_2\le  \cdots \le \lambda_n \le \cdots$ are the eigenvalues of $-\Delta$ and the $\Phi_j \in \mathcal F$, $j\ge 1$, an orthonormal basis of $L^2_0(K,\mu)$ such that  %the corresponding eigenfunctions, i.e.
\[
\Delta \Phi_j =-\lambda_j \Phi_j.
\]
Notice that $\Phi_j \in \mathcal{F}$ and thus is continuous.

It is known from \cite{Shima} that the counting function of the eigenvalues:
\[
N(t)=\mathbf{Card} \{ \lambda_j \le t \}
\]
satisfies
\[
N(t) \sim \Theta(t) t^{d_h/d_w}
\]
when $t \to +\infty$ where $\Theta$ is a function bounded away from 0. In particular,
\[
\sum_{j=1}^{+\infty} \frac{1}{\lambda_j^{2s}} <+\infty
\]
whenever $s>\frac{d_h}{2d_w}$. For $s>\frac{d_h}{2d_w}$, the operator $(-\Delta)^{-s} $ is then defined as the bounded operator $%f
(-\Delta)^{-s} : L^2_0(K,\mu) \to L^2_0(K,\mu)$ given by
\[
(-\Delta)^{-s}f =\sum_{j=1}^{+\infty} \frac{1}{\lambda_j^s} \left( \int_K  \Phi_j(y) f(y) d\mu(y)\right) \Phi_j.
\]
From this definition, the function $(-\Delta)^{-s}f$ is thus a priori only defined $\mu$ a.e. We will prove in this section and the next one that it actually admits a H\"older continuous version, see Remark \ref{continuous version} and Theorem \ref{Holder Riesz}. To this end, we first collect basic heat kernel estimates.

\begin{lemma}\label{cont:pt:tgd}
There exists a constant $C>0$ such that for every $x,y \in K$ and $t \ge 1$,
\[
| p_t(x,y) -1 | \le C e^{-\lambda_1 t},
\]
where $\lambda_1 >0$ is the first non-zero eigenvalue of $K$.
\end{lemma}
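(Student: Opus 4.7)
The plan is to exploit the spectral expansion \eqref{spectral} directly. Subtracting $1$ gives
\[
p_t(x,y) - 1 = \sum_{j=1}^{+\infty} e^{-\lambda_j t} \Phi_j(x) \Phi_j(y),
\]
and the task is to extract the dominant decay $e^{-\lambda_1 t}$ while controlling the tail uniformly in $x,y$. Since $\lambda_j \ge \lambda_1$ and $t \ge 1$, I would split the exponential as
\[
e^{-\lambda_j t} = e^{-\lambda_j / 2}\, e^{-\lambda_j(t-1/2)} \le e^{-\lambda_j/2}\, e^{-\lambda_1(t-1/2)},
\]
which isolates a factor that only depends on $t$ and leaves behind a time-independent weight $e^{-\lambda_j/2}$ in the series.

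Next I would apply Cauchy--Schwarz in the index $j$:
\[
\sum_{j\ge 1} e^{-\lambda_j/2} |\Phi_j(x)||\Phi_j(y)| \le \Bigl(\sum_{j\ge 1} e^{-\lambda_j/2} \Phi_j(x)^2\Bigr)^{1/2}\Bigl(\sum_{j\ge 1} e^{-\lambda_j/2} \Phi_j(y)^2\Bigr)^{1/2}.
\]
The key observation is that each factor under the square root is exactly $p_{1/2}(z,z) - 1$ at $z = x$ or $y$, again by \eqref{spectral} evaluated at $t = 1/2$ on the diagonal. The on-diagonal upper bound in \eqref{eq:subGauss-upper} then provides $p_{1/2}(z,z) \le c_3\, 2^{d_h/d_w}$ uniformly in $z \in K$, so both factors are bounded by a constant independent of $x,y$.

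Putting the pieces together yields
\[
|p_t(x,y) - 1| \le e^{\lambda_1/2}\bigl(c_3\, 2^{d_h/d_w} - 1\bigr)\, e^{-\lambda_1 t},
\]
which is the desired inequality. I do not foresee a genuine obstacle: the only mild point is that the upper estimate in \eqref{eq:subGauss-upper} is stated for $t \in (0,1)$, so one must be careful to evaluate it at an interior time such as $t = 1/2$ rather than at $t = 1$ itself, which is why the splitting uses $1/2$ rather than $1$.
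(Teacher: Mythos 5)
Your argument is correct, and it takes a slightly different (and arguably cleaner) route than the paper. The paper also starts from the spectral expansion, but it first derives a uniform pointwise bound on each eigenfunction, $|\Phi_j(x)| \le C e^{\lambda_j/4}$, by writing $\Phi_j(x) = e^{\lambda_j t}\int_K p_t(x,y)\Phi_j(y)\,d\mu(y)$, applying Cauchy--Schwarz in the spatial variable to get $|\Phi_j(x)| \le e^{\lambda_j t} p_{2t}(x,x)^{1/2}$, and evaluating at $t=1/4$; it then sums the series term by term, which implicitly requires the convergence of $\sum_j e^{-\lambda_j/2}$ (a fact that follows from the eigenvalue counting asymptotics or a trace argument, but is not spelled out). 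You instead apply Cauchy--Schwarz in the summation index $j$ and recognize each resulting factor as $p_{1/2}(z,z)-1$ on the diagonal, so the only analytic input is the on-diagonal upper bound from \eqref{eq:subGauss-upper} at an interior time; this bypasses both the individual sup-norm bounds on the $\Phi_j$ and the separate convergence question for $\sum_j e^{-\lambda_j/2}$. Both proofs use the same two ingredients (spectral expansion and the sub-Gaussian on-diagonal estimate), but your packaging is more economical and self-contained; the paper's version has the side benefit of producing the explicit eigenfunction bounds $\|\Phi_j\|_\infty \le C e^{\lambda_j/4}$, which could be reused elsewhere. Your caution about evaluating \eqref{eq:subGauss-upper} only at times in $(0,1)$ is well placed and mirrors the paper's choice of $t=1/4$ (giving $p_{1/2}$ on the diagonal there as well).
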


\begin{proof}
As already noted, the heat kernel $p_t(x,y)$ admits a uniformly convergent spectral expansion:
\begin{align}\label{spectral2}
p_t(x,y)=1+\sum_{j=1}^{+\infty} e^{-\lambda_j t} \Phi_j(x) \Phi_j(y).
\end{align}
Since the $\Phi_j$'s are eigenfunctions, one has for any $t > 0$,
\[
\Phi_j(x)=e^{\lambda_j t}  \int_K p_t (x,y) \Phi_j(y) d\mu(y).
\]
Thus, from Cauchy-Schwarz inequality, we have for every $t >0$
\begin{align*}
| \Phi_j(x) | & \le e^{\lambda_j t} \left(  \int_K p_t (x,y)^2  d\mu(y) \right)^{1/2}\left(  \int_K  \Phi_j(y)^2 d\mu(y)\right)^{1/2}=e^{\lambda_j t}  p_{2t}(x,x)^{1/2}.
\end{align*}
In particular, choosing $t=1/4$ and using \eqref{eq:subGauss-upper}, one obtains that there exists a constant $C>0$ such that for every $x \in K$,
\[
| \Phi_j(x) | \le C e^{\lambda_j /4}.
\]
Coming back to the expansion \eqref{spectral2} one obtains that for every $x,y \in K$ and $t \ge 1$,
\[
| p_t(x,y) -1 | \le \sum_{j=1}^{+\infty} e^{-\lambda_j t} e^{\lambda_j /2}\le {e}^{-\lambda_1 t}{e}^{\lambda_1}\sum_{j=1}^{+\infty} {e}^{-\lambda_j/2},
\]
which concludes the proof. %The result follows.
\end{proof}

\begin{lemma}\label{lemma 1}
For any $s>0$ and $x,y \in K$, $ x\neq y$, the integral
\[
 \int_0^{+\infty} t^{s-1} (p_t(x,y) -1)dt
\]
is absolutely convergent. Moreover, if $s > \frac{d_h}{d_w}$, the integral is also convergent for $x=y$.
\end{lemma}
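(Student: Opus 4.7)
The plan is to split the integral at $t=1$ and treat the two pieces with the two available heat kernel estimates: Lemma~\ref{cont:pt:tgd} on $[1,+\infty)$ and the upper sub-Gaussian bound~\eqref{eq:subGauss-upper} on $(0,1]$.

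For the tail $\int_1^{+\infty} t^{s-1} |p_t(x,y)-1|\, dt$, I would apply Lemma~\ref{cont:pt:tgd} directly to dominate the integrand by $C\, t^{s-1} e^{-\lambda_1 t}$. The resulting integral converges for every $s>0$ and every pair $(x,y)\in K\times K$ (diagonal or not), since the exponential decay easily beats $t^{s-1}$ at infinity.

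For the short-time part I would use the triangle inequality $|p_t(x,y)-1|\le p_t(x,y)+1$. The contribution from the constant $1$ is $\int_0^1 t^{s-1}\,dt=1/s$, which is finite for $s>0$. For the $p_t(x,y)$ contribution, I would insert the upper bound from~\eqref{eq:subGauss-upper}. When $x\neq y$, the positivity of $d(x,y)$ makes the sub-Gaussian factor $\exp\bigl(-c_4(d(x,y)^{d_w}/t)^{1/(d_w-1)}\bigr)$ decay super-polynomially as $t\to 0^+$; this factor absorbs the $t^{-d_h/d_w}$ singularity, so $p_t(x,y)$ is uniformly bounded (actually rapidly vanishing) on $(0,1]$, and the integral is finite for every $s>0$. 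When $x=y$, only the on-diagonal bound $p_t(x,x)\le c_3\, t^{-d_h/d_w}$ is available, giving an integrand of order $t^{s-1-d_h/d_w}$, which is integrable at $0$ precisely under the hypothesis $s>d_h/d_w$.

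There is no genuine obstacle here: the argument is a routine short-time/long-time splitting resting only on the two estimates already proved. The only point worth emphasizing is that the off-diagonal super-polynomial decay is exactly what permits the full range $s>0$ in the absolutely convergent part of the statement, while on the diagonal one pays precisely the factor $t^{-d_h/d_w}$ that forces the additional condition $s>d_h/d_w$.
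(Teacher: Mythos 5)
Your argument is correct and is essentially the paper's own proof, just written out in more detail: the same splitting at $t=1$, with Lemma~\ref{cont:pt:tgd} handling the tail and the sub-Gaussian upper bound~\eqref{eq:subGauss-upper} handling $(0,1]$, where the off-diagonal exponential factor absorbs $t^{-d_h/d_w}$ and the diagonal case forces $s>d_h/d_w$. Nothing to add.
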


\begin{proof}Thanks to the heat kernel upper bound  \eqref{eq:subGauss-upper}, 
the integral $\int_0^{1} t^{s-1} |p_t(x,y) -1|dt$ is finite for any $s>0$ when $x \neq y$  and for $s > \frac{d_h}{d_w}$ when $x=y$. Moreover, for any $x,y\in K$, the integral $\int_1^{+\infty} t^{s-1} |p_t(x,y) -1|dt$ is also finite thanks to lemma \ref{cont:pt:tgd}.
\end{proof}

We are now ready for the definition of the fractional Riesz kernels:

\begin{definition}
For a parameter $s>0$, we define the fractional Riesz kernel $G_s $ by 
\begin{align}\label{green function}
G_s(x,y)=\frac{1}{\Gamma(s)} \int_0^{+\infty} t^{s-1} (p_t(x,y) -1)dt, \quad x,y \in K, \, x\neq y.
\end{align}
with $\Gamma$ the gamma function.
\end{definition}

We will be interested in the integrability properties of $G_s$. The following estimates are therefore important.

\begin{proposition}\label{estimate G}
\
\begin{enumerate}
\item If  $s \in (0, d_h/d_w)$, there exists a constant $C >0$ such that for every  $x,y \in K$, $x \neq y$,
\[
| G_s(x,y) | \le \frac{C}{d(x,y)^{d_h-sd_w}}.
\]
\item If $s = d_h/d_w$, there exists a constant $C >0$ such that for every  $x,y \in K$, $x \neq y$
\[
| G_s(x,y) | \le C | \ln d(x,y) |.
\]
\item If  $s > d_h/d_w$, there exists a constant $C >0$ such that for every  $x,y \in K$, 
\[
 | G_s(x,y)| \le C.
\]
\end{enumerate}

\end{proposition}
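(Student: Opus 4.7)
\medskip

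\textbf{Plan of proof.} The strategy is the standard two-regime split of the Riesz representation, using the sub-Gaussian upper bound in \eqref{eq:subGauss-upper} for short times and Lemma~\ref{cont:pt:tgd} for long times. Write
\[
\Gamma(s)\,G_s(x,y)=\underbrace{\int_0^1 t^{s-1}(p_t(x,y)-1)\,dt}_{=:\,I_0(x,y)}+\underbrace{\int_1^{+\infty} t^{s-1}(p_t(x,y)-1)\,dt}_{=:\,I_\infty(x,y)}.
\]
For the tail $I_\infty$, Lemma~\ref{cont:pt:tgd} gives $|p_t-1|\le Ce^{-\lambda_1 t}$, so
$|I_\infty(x,y)|\le C\int_1^{+\infty}t^{s-1}e^{-\lambda_1 t}dt$, a finite constant independent of $x,y$ in every regime of $s$. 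So the whole analysis reduces to $I_0$.

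For $I_0$, bound $|p_t-1|\le p_t+1$. The contribution of the $1$ is $\int_0^1 t^{s-1}dt=1/s$, which is harmless. For the $p_t$ contribution, write $r=d(x,y)$ and use \eqref{eq:subGauss-upper} to get
\[
\int_0^1 t^{s-1}p_t(x,y)\,dt\le c_3\int_0^1 t^{s-1-d_h/d_w}\exp\!\Bigl(-c_4\bigl(r^{d_w}/t\bigr)^{1/(d_w-1)}\Bigr)dt.
\]
The key move is the substitution $u=r^{d_w}/t$, which gives
\[
\int_0^1 t^{s-1}p_t(x,y)\,dt\le c_3\,r^{sd_w-d_h}\int_{r^{d_w}}^{+\infty}u^{d_h/d_w-s-1}e^{-c_4 u^{1/(d_w-1)}}\,du,
\]
i.e.\ the dependence on $r$ has been isolated into the prefactor and the lower limit.

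It then remains to analyze the $u$-integral according to the sign of $\alpha:=d_h/d_w-s$. The exponential factor makes the tail at $+\infty$ harmless, so only the behaviour near $u=r^{d_w}$ matters, which reduces essentially to estimating $\int_{r^{d_w}}^{1}u^{\alpha-1}du$. If $\alpha>0$ (case $s<d_h/d_w$), this integral is bounded as $r\to 0$, leaving the prefactor $r^{sd_w-d_h}=r^{-(d_h-sd_w)}$, which is (1). If $\alpha=0$ (case $s=d_h/d_w$), the integral produces a factor $d_w|\ln r|$ while the prefactor is $1$, giving (2). If $\alpha<0$ (case $s>d_h/d_w$), the integral behaves like $r^{d_w\alpha}/|\alpha|=r^{d_h-sd_w}/|\alpha|$, exactly cancelling the prefactor and producing a uniform constant, which gives (3).

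The only mild obstacle is the bookkeeping across the three cases, in particular making sure the bound in case (3) is genuinely uniform over all of $K\times K$ (including $x=y$): this follows from Lemma~\ref{lemma 1}, since for $s>d_h/d_w$ the integral defining $G_s(x,x)$ is absolutely convergent and the short-time estimate above is controlled by a constant independent of $r\in[0,\mathrm{diam}(K)]$. Combining the three cases with the uniform bound on $I_\infty$ yields the three claimed inequalities.
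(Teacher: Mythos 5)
Your proof is correct and follows essentially the same route as the paper: split the time integral at $t=1$, control the tail via Lemma~\ref{cont:pt:tgd}, and isolate the $d(x,y)$-dependence of the short-time part through the substitution $t\leftrightarrow d(x,y)^{d_w}$ applied to the sub-Gaussian upper bound \eqref{eq:subGauss-upper}. The only (immaterial) differences are that you use the reciprocal change of variable and run case (3) through the same substitution, whereas the paper simply bounds the exponential by $1$ there.
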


\begin{proof}
We have
\begin{align*}
G_s(x,y)& =\frac{1}{\Gamma(s)} \int_0^{+\infty} t^{s-1} (p_t(x,y) -1)dt \\
 & = \frac{1}{\Gamma(s)} \int_0^{1} t^{s-1} (p_t(x,y) -1)dt+ \frac{1}{\Gamma(s)} \int_1^{+\infty} t^{s-1} (p_t(x,y) -1)dt.
\end{align*}
The integral $\int_1^{+\infty} t^{s-1} (p_t(x,y) -1)dt$ can uniformly be bounded on $K \times K$  by a constant using lemma~\ref{cont:pt:tgd}, so we just need to uniformly estimate the integral $\int_0^{1} t^{s-1} p_t(x,y)dt$. Thanks to the heat kernel upper bound  \eqref{eq:subGauss-upper} we have:
\begin{align*}
\int_0^{1} t^{s-1} p_t(x,y)dt& \le  c_{3} \int_0^1 t^{s-1-d_{h}/d_{w}}\exp\biggl(-c_{4}\Bigl(\frac{d(x,y)^{d_{w}}}{t}\Bigr)^{\frac{1}{d_{w}-1}}\biggr) dt .
\end{align*}
We now divide our analysis depending on the value of $s$. If $s >d_{h}/d_{w}$, one can simply bound
\[
\int_0^1 t^{s-1-d_{h}/d_{w}}\exp\biggl(-c_{4}\Bigl(\frac{d(x,y)^{d_{w}}}{t}\Bigr)^{\frac{1}{d_{w}-1}}\biggr) dt \le \int_0^1 t^{s-1-d_{h}/d_{w}} dt.
\]
If $s < d_{h}/d_{w}$, using the change of variable $t= u d(x,y)^{d_{w}}$, one sees that
\begin{align*}
& \int_0^{1} t^{s-1-d_{h}/d_{w}}\exp\biggl(-c_{4}\Bigl(\frac{d(x,y)^{d_{w}}}{t}\Bigr)^{\frac{1}{d_{w}-1}}\biggr) dt \\
 =&d(x,y)^{sd_w-d_h} \int_0^{1/d(x,y)^{d_{w}}}  u^{s-1-d_{h}/d_{w}} \exp\biggl(-c_{4}\Bigl(\frac{1}{u}\Bigr)^{\frac{1}{d_{w}-1}}\biggr)  du \\
  \le & d(x,y)^{sd_w-d_h} \int_0^{+\infty }  u^{s-1-d_{h}/d_{w}} \exp\biggl(-c_{4}\Bigl(\frac{1}{u}\Bigr)^{\frac{1}{d_{w}-1}}\biggr)  du.
\end{align*}
Finally, if $s =d_{h}/d_{w}$, using again  the change of variable $t= u d(x,y)^{d_{w}}$ and setting $R=\mathrm{ diam}(K)$, one sees that
\begin{align*}
 & \int_0^{1} t^{s-1-d_{h}/d_{w}}\exp\biggl(-c_{4}\Bigl(\frac{d(x,y)^{d_{w}}}{t}\Bigr)^{\frac{1}{d_{w}-1}}\biggr) dt \\
  = &  \int_0^{1/d(x,y)^{d_{w}}}  \frac{1}{u} \exp\biggl(-c_{4}\Bigl(\frac{1}{u}\Bigr)^{\frac{1}{d_{w}-1}}\biggr)  du \\
  \le &   \int_{1/R^{d_{w}}}^{1/d(x,y)^{d_{w}}}  \frac{1}{u} \exp\biggl(-c_{4}\Bigl(\frac{1}{u}\Bigr)^{\frac{1}{d_{w}-1}}\biggr)  du  +\int_0^{1/R^{d_{w}}}  \frac{1}{u} \exp\biggl(-c_{4}\Bigl(\frac{1}{u}\Bigr)^{\frac{1}{d_{w}-1}}\biggr)  du \\
  \le &   \int_{1/R^{d_{w}}}^{1/d(x,y)^{d_{w}}}  \frac{1}{u}   du  +\int_0^{1/R^{d_{w}}}  \frac{1}{u} \exp\biggl(-c_{4}\Bigl(\frac{1}{u}\Bigr)^{\frac{1}{d_{w}-1}}\biggr)  du \\
  \le &  d_w | \ln d(x,y) | +\int_0^{1/R^{d_{w}}} \frac{1}{u} \exp\biggl(-c_{4}\Bigl(\frac{1}{u}\Bigr)^{\frac{1}{d_{w}-1}}\biggr)  du \le C | \ln d(x,y) |.
\end{align*}
\end{proof}

\begin{proposition}\label{integrability G}
If $s >\frac{d_h}{2d_w}$, then for every $x \in K$, $y \to G_s(x,y) \,  \in L_0^2(K,\mu)$. Moreover, there exists a constant $C>0$ such that for every  $x \in K$,
\[
\int_K G_s(x,y)^2 d\mu(y) \le C.
\]
\end{proposition}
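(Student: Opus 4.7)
The plan is to combine the three pointwise bounds from Proposition \ref{estimate G} with the Ahlfors regularity of $\mu$ to get the $L^2$ bound, and to use stochastic completeness to check that $y\mapsto G_s(x,y)$ has zero mean. I will split the argument into two independent parts: (i) verifying the zero-mean condition and (ii) verifying the uniform $L^2$ bound, the latter by cases on $s$.

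For (i), I would use Fubini (justified by Lemma~\ref{lemma 1} and the case analysis of Proposition~\ref{estimate G}, which give a $\mu$-integrable dominating bound in $y$ for fixed $x$) together with stochastic completeness $\int_K p_t(x,y)\, d\mu(y)=1$ to obtain
\[
\int_K G_s(x,y)\, d\mu(y) = \frac{1}{\Gamma(s)}\int_0^{+\infty} t^{s-1}\!\left(\int_K p_t(x,y)\,d\mu(y)-1\right)dt = 0,
\]
so $y\mapsto G_s(x,y)$ lies in $L^2_0(K,\mu)$ as soon as it lies in $L^2(K,\mu)$.

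For (ii), I would treat the three regimes from Proposition~\ref{estimate G} separately. When $s>d_h/d_w$, $G_s$ is uniformly bounded and the inequality is immediate. When $s=d_h/d_w$, the bound is logarithmic; using Ahlfors regularity \eqref{Ahlfors} and a dyadic decomposition of $K$ around $x$ into annuli $B(x,2^{-n}R)\setminus B(x,2^{-n-1}R)$ with $R=\mathrm{diam}(K)$, the sum $\sum_n n^2\, 2^{-nd_h}$ converges and the $L^2$ norm is bounded uniformly in $x$. The main step is the remaining range $\frac{d_h}{2d_w}<s<\frac{d_h}{d_w}$, where Proposition~\ref{estimate G} gives $|G_s(x,y)|\le C\, d(x,y)^{sd_w-d_h}$ and we need
\[
\int_K d(x,y)^{-2(d_h-sd_w)}\, d\mu(y)\le C
\]
uniformly in $x$. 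Setting $\alpha=2(d_h-sd_w)$, the hypothesis $s>d_h/(2d_w)$ is precisely equivalent to $\alpha<d_h$, which is the critical condition for integrability of a negative power of the distance against a $d_h$-Ahlfors regular measure.

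To execute the last step, I would write
\[
\int_K d(x,y)^{-\alpha} d\mu(y) = \sum_{n\ge 0} \int_{B(x,2^{-n}R)\setminus B(x,2^{-n-1}R)} d(x,y)^{-\alpha} d\mu(y),
\]
bound each annular integral by $(2^{-n-1}R)^{-\alpha}\,\mu(B(x,2^{-n}R)) \le C\, 2^{n\alpha-nd_h}R^{d_h-\alpha}$ via \eqref{Ahlfors}, and sum the geometric series, which converges because $d_h-\alpha>0$. The Ahlfors constants are independent of $x$, so the resulting bound is uniform on $K$, completing the proof. The only genuinely delicate point is this last summation, and it works precisely on the nose at the lower threshold $s=\frac{d_h}{2d_w}$, matching the threshold appearing in the main theorem.
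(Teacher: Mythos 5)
Your proof is correct and follows essentially the same route as the paper: the decisive step is the dyadic annuli decomposition around $x$ combined with the Ahlfors regularity \eqref{Ahlfors}, with the hypothesis $s>\frac{d_h}{2d_w}$ entering exactly as the condition $2(d_h-sd_w)<d_h$ for the geometric series to converge. The only differences are cosmetic: the paper collapses the logarithmic and bounded regimes into the power-law regime by dominating $|\ln u|$ by a small negative power of $u$, whereas you treat the three cases separately, and you additionally verify the zero-mean property via Fubini and stochastic completeness, a point the paper's own proof leaves implicit.
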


\begin{proof}
From proposition \ref{estimate G}, it is enough to prove that for $\gamma  <\frac{d_h}{2}$, the function $y \to \frac{C}{d(x,y)^{\gamma }}$ is in $L^2(K,\mu)$ (since e.g. for $\alpha>0$, $\max(1,|\ln u|)\le \frac{C}{u^\alpha}$ for $0<u\le u_0$). To prove this, we denote by $R$ the diameter of $K$ and use  the Ahlfors regularity \eqref{Ahlfors} of the measure $\mu$ and a dyadic  annuli decomposition as follows. We denote by $C$ constants (depending only on $R,s,d_h,d_w$) whose value may change from line to line. One has: for $\gamma<\frac{d_h}{2}$,
\begin{align*}
 \int_K \frac{d\mu(y)}{d(x,y)^{2\gamma}} 
 & \le \sum_{j=0}^{+\infty} \int_{B(x,R 2^{-j})\setminus B(x,R 2^{-j-1}) } \frac{d\mu(y)}{d(x,y)^{2\gamma}} \\
 & \le C \sum_{j=0}^{+\infty} 2^{2 j \gamma} \mu \left( B(x,R 2^{-j})\setminus B(x,R 2^{-j-1}) \right) \\
 & \le C \sum_{j=0}^{+\infty} 2^{ 2j \gamma} \mu \left( B(x,R 2^{-j}) \right) \\
 &\le C  \sum_{j=0}^{+\infty} 2^{ j (2\gamma-d_h)}  <+\infty,
\end{align*}
which concludes the proof.
\end{proof}

\begin{proposition}\label{relation to frac}
Let $s >\frac{d_h}{2d_w}$ and consider the operator $\mathcal{G}_s : L_0^2(K,\mu) \to L^2_0(K,\mu) $ defined  by
\[
\mathcal{G}_s f (x)= \int_K G_s(x,y) f(y) d\mu(y), \, x \in K.
\]
Then for every $f \in L_0^2(K,\mu)$, one has $\mu$ a.e.
\[
(-\Delta)^{-s}f=\mathcal{G}_s f.
\]
\end{proposition}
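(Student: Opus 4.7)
The strategy is to verify the identity coefficient-by-coefficient against the eigenbasis $(\Phi_k)_{k\ge 1}$ of $L^2_0(K,\mu)$. First, by Proposition \ref{integrability G} and the Cauchy--Schwarz inequality, $|\mathcal{G}_s f(x)|\le \|G_s(x,\cdot)\|_{L^2}\|f\|_{L^2}\le C\|f\|_{L^2}$, so $\mathcal{G}_s f$ is bounded and hence belongs to $L^2(K,\mu)$. Moreover, stochastic completeness gives $\int_K(p_t(x,y)-1)d\mu(x)=0$, whence $\int_K G_s(x,y)\,d\mu(x)=0$ and Fubini yields $\int_K\mathcal{G}_s f\,d\mu=0$, i.e.\ $\mathcal{G}_s f\in L^2_0(K,\mu)$. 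Since $(-\Delta)^{-s}f$ is defined by its Fourier coefficients $\lambda_k^{-s}\langle f,\Phi_k\rangle$ in this same basis, it will suffice to show that $\langle\mathcal{G}_s f,\Phi_k\rangle=\lambda_k^{-s}\langle f,\Phi_k\rangle$ for every $k\ge 1$.

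The core of the argument is the ``spectral formula''
\[
\int_K G_s(x,y)\,\Phi_k(x)\,d\mu(x)=\frac{1}{\lambda_k^s}\,\Phi_k(y),\qquad k\ge 1,\ y\in K.
\]
To derive it, plug in the definition \eqref{green function}, exchange the order of the $t$- and $x$-integrals by Fubini, and use that $\Phi_k$ is an eigenfunction of the semigroup together with the symmetry of $p_t$: $\int_K p_t(x,y)\Phi_k(x)\,d\mu(x)=e^{-\lambda_k t}\Phi_k(y)$, combined with $\int_K\Phi_k\,d\mu=0$. The remaining $t$-integral is the standard gamma identity $\int_0^{+\infty}t^{s-1}e^{-\lambda_k t}\,dt=\Gamma(s)/\lambda_k^s$, which delivers the formula. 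A second application of Fubini to $\langle \mathcal{G}_s f,\Phi_k\rangle=\iint G_s(x,y)f(y)\Phi_k(x)\,d\mu(x)d\mu(y)$ then gives the desired Fourier coefficient $\lambda_k^{-s}\langle f,\Phi_k\rangle$. As $(\Phi_k)_{k\ge 1}$ is an orthonormal basis of $L^2_0(K,\mu)$ and both $\mathcal{G}_s f$ and $(-\Delta)^{-s}f$ lie in $L^2_0$, they must coincide $\mu$-a.e.

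The main obstacle is the Fubini interchange of the $t$- and $x$-integrals; this is where the hypothesis $s>\frac{d_h}{2d_w}$ enters sharply. Cauchy--Schwarz in $x$ gives
\[
\int_K|p_t(x,y)-1|\,|\Phi_k(x)|\,d\mu(x)\le \|p_t(\cdot,y)-1\|_{L^2}=\bigl(p_{2t}(y,y)-1\bigr)^{1/2},
\]
using Chapman--Kolmogorov together with $\int_K p_t(x,y)d\mu(x)=1$ and $\mu(K)=1$ for the identity. The upper bound in \eqref{eq:subGauss-upper} yields $p_{2t}(y,y)\le Ct^{-d_h/d_w}$ for $t\in(0,1]$, whereas Lemma \ref{cont:pt:tgd} gives $|p_{2t}(y,y)-1|\le Ce^{-2\lambda_1 t}$ for $t\ge 1$. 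Consequently
\[
\int_0^{+\infty}t^{s-1}\|p_t(\cdot,y)-1\|_{L^2}\,dt\le C\int_0^1 t^{s-1-d_h/(2d_w)}\,dt+C\int_1^{+\infty}t^{s-1}e^{-\lambda_1 t}\,dt,
\]
and the first integral converges precisely when $s>\frac{d_h}{2d_w}$. The outer Fubini in $x$ and $y$ is then immediate from the uniform bound $\int_K G_s(x,y)^2 d\mu(y)\le C$ of Proposition \ref{integrability G} and Cauchy--Schwarz.
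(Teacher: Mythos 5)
Your proof is correct and follows essentially the same route as the paper's: both reduce the statement to the spectral identity $\int_K G_s(x,y)\Phi_k(y)\,d\mu(y)=\lambda_k^{-s}\Phi_k(x)$, obtained by Fubini, the eigenfunction property of the semigroup, and the gamma integral $\int_0^{+\infty}t^{s-1}e^{-\lambda_k t}\,dt=\Gamma(s)\lambda_k^{-s}$ --- you merely verify the Fourier coefficients of $\mathcal{G}_s f$ against the basis, while the paper expands $f$ in the eigenbasis and pushes the bounded operator $\mathcal{G}_s$ through the sum. Your explicit justification of the Fubini interchange via $\|p_t(\cdot,y)-1\|_{L^2}=(p_{2t}(y,y)-1)^{1/2}$ and the resulting sharp appearance of the condition $s>\frac{d_h}{2d_w}$ is a welcome detail that the paper leaves implicit.
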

\begin{remark}\label{continuous version}
It is important to note that from proposition \ref{integrability G},  $\mathcal{G}_s f $ is defined for all $x\in K$ and not only $\mu$ a.e. Therefore $\mathcal{G}_s f$ can be used as a pointwise definition of $(-\Delta)^{-s}f$.
\end{remark}
\begin{proof}
Let $f \in L_0^2(K,\mu)$. One can write
\[
f=\sum_{j=1}^{+\infty}  \left( \int_K  \Phi_j(y) f(y) d\mu(y)\right) \Phi_j
\]
where the sum is convergent in $ L_0^2(K,\mu)$. From proposition \ref{integrability G} the operator $\mathcal{G}_s : L_0^2(K,\mu) \to L^2_0(K,\mu) $ is bounded. %\textcolor{red}{\fbox{Doute argu, je crois que tu utilises $||G_s(x,\cdot)||_2^2$ bornee independamment de $x$}}

Therefore, in $L_0^2(K,\mu)$
\[
\mathcal{G}_s f=\sum_{j=1}^{+\infty}  \left( \int_K  \Phi_j(y) f(y) d\mu(y)\right) \mathcal{G}_s \Phi_j.
\]
By definition of $\mathcal{G}_s$, we  now compute that for  $x \in K$
\begin{align*}
\mathcal{G}_s \Phi_j(x)& = \int_K G_s(x,y) \Phi_j(y) d\mu(y) \\
 & =\frac{1}{\Gamma(s)} \int_K  \int_0^{+\infty} t^{s-1} (p_t(x,y) -1) \Phi_j(y) dt d\mu(y) \\
 & =\frac{1}{\Gamma(s)}   \int_0^{+\infty} t^{s-1} \int_K  (p_t(x,y) -1) \Phi_j(y)  d\mu(y) dt \\
 & =\frac{1}{\Gamma(s)}   \int_0^{+\infty} t^{s-1} \int_K  p_t(x,y)  \Phi_j(y)  d\mu(y) dt \\
&  =\frac{1}{\Gamma(s)}   \int_0^{+\infty} t^{s-1} (P_t  \Phi_j)(x)   dt \\
& =\frac{1}{\Gamma(s)}   \int_0^{+\infty} t^{s-1} e^{-\lambda_j t}    dt \Phi_j (x) \\
&= \lambda_j^{-s} \Phi_j (x).
\end{align*}
Therefore, one has $\mu$ a.e.
\[
\mathcal{G}_s f =\sum_{j=1}^{+\infty} \frac{1}{\lambda_j^s} \left( \int_K  \Phi_j(y) f(y) d\mu(y)\right) \Phi_j=(-\Delta)^{-s}f ,
\]
which establishes the proof.
\end{proof}

\subsection{H\"older continuity of fractional Riesz kernels}
The main theorem of the section is the following:

\begin{theorem}\label{Holder Riesz}
Let $s \in \left( \frac{d_h}{2d_w} , 1- \frac{d_h}{2d_w}\right)$. There exists a constant $C>0$ such that for every  $x,y \in K$ and $ f \in L^2(K,\mu)$,
\[
\left| \int_K (G_s (x,z)-G_s(y,z))  f(z) d\mu(z) \right|\le  Cd(x,y)^{sd_w-\frac{d_h}{2}} \| f \|_{L^2(K,\mu)}.
\]
As a consequence, there exists a constant $C>0$ such that for every  $x,y \in K$,

\[
\int_X (G_s (x,z)-G_s(y,z))^2  d\mu(z) \le Cd(x,y)^{ 2sd_w-d_h} .
\]
\end{theorem}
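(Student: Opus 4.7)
By the Cauchy-Schwarz inequality in $L^2(K,\mu)$,
\[
\left|\int_K (G_s(x,z) - G_s(y,z))f(z)\,d\mu(z)\right|^2 \le \|f\|_{L^2(K,\mu)}^2 \int_K(G_s(x,z)-G_s(y,z))^2 d\mu(z),
\]
so the first assertion follows from the second; I only need to prove the latter. From Proposition \ref{relation to frac} and the spectral basis $\{\Phi_j\}$ of $L_0^2(K,\mu)$, the series $G_s(x,\cdot) - G_s(y,\cdot) = \sum_j \lambda_j^{-s}(\Phi_j(x)-\Phi_j(y))\Phi_j$ converges in $L_0^2(K,\mu)$ (since $\sum_j \lambda_j^{-2s}<\infty$ when $s>d_h/(2d_w)$), so by Parseval together with the Laplace representation $\lambda_j^{-2s}=\frac{1}{\Gamma(2s)}\int_0^\infty t^{2s-1}e^{-\lambda_j t}dt$ and the spectral expansion of $p_t$, I derive the key identity
\[
\int_K (G_s(x,z) - G_s(y,z))^2 d\mu(z) = \frac{1}{\Gamma(2s)}\int_0^{+\infty} t^{2s-1}\,u(t)\,dt,
\]
where $u(t):=p_t(x,x)+p_t(y,y)-2p_t(x,y)=\int_K(p_{t/2}(x,z)-p_{t/2}(y,z))^2 d\mu(z)\ge 0$.

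Set $D := d(x,y)$ and $T := D^{d_w}$, and assume $T<1$ (the case $T\ge 1$ is covered by compactness of $K$). I split the integral into $(0,T)$, $(T,1)$ and $(1,+\infty)$. On $(0,T)$, the upper bound in \eqref{eq:subGauss-upper} combined with $p_t\ge 0$ gives $u(t)\le 2c_3 t^{-d_h/d_w}$, so the contribution is $\le CT^{2s-d_h/d_w}=CD^{2sd_w-d_h}$; convergence at $0$ uses $s>d_h/(2d_w)$.

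For $t\in(T,1)$, I would apply a spatial Hölder estimate for the heat kernel on the Sierpi\'nski gasket: for every $\beta\in(0,d_w-d_h)$ there is $C_\beta>0$ such that, when $D\le t^{1/d_w}$,
\[
|p_{t/2}(x,z)-p_{t/2}(y,z)|\le C_\beta\left(\frac{D}{t^{1/d_w}}\right)^\beta \sqrt{p_t(x,z)p_t(y,z)}.
\]
Squaring, integrating in $z$, and using $\int_K p_t(x,z)p_t(y,z) d\mu(z) = p_{2t}(x,y)\le C t^{-d_h/d_w}$ yields $u(t)\le C'_\beta D^{2\beta}t^{-(d_h+2\beta)/d_w}$. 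Choosing $\beta \in (sd_w-d_h/2,\,d_w-d_h)$ --- a nonempty interval \emph{precisely} under the upper constraint $s<1-d_h/(2d_w)$ --- makes $2s-1-(d_h+2\beta)/d_w<-1$, so the integral over $(T,1)$ is dominated by its lower endpoint, yielding $C''_\beta D^{2\beta} T^{2s-(d_h+2\beta)/d_w}=C''_\beta D^{2sd_w-d_h}$. Finally, for $t\ge 1$, stochastic completeness gives $\phi:=p_{1/2}(x,\cdot)-p_{1/2}(y,\cdot)\in L_0^2(K,\mu)$, and the $L_0^2$-spectral gap yields $u(t)=\|P_{(t-1)/2}\phi\|_{L^2}^2\le e^{-\lambda_1(t-1)}u(1)\le CD^{2\beta}e^{-\lambda_1(t-1)}$, so $\int_1^{+\infty} t^{2s-1}u(t)dt\le C'''D^{2\beta}\le C'''D^{2sd_w-d_h}$.

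The main obstacle is securing the spatial Hölder estimate for $p_t$ with exponent $\beta$ arbitrarily close to $d_w-d_h$; this is the heart of the regularity theory for sub-Gaussian heat kernels on the gasket (in the spirit of \cite{BP88}), and it is exactly what pins down $s<1-d_h/(2d_w)$ as the correct upper threshold. The lower constraint $s>d_h/(2d_w)$ only plays a role in securing short-time integrability.
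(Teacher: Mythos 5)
Your overall architecture is sound and essentially dual to the paper's: you reduce everything to the on-diagonal quantity $u(t)=\|p_{t/2}(x,\cdot)-p_{t/2}(y,\cdot)\|_{L^2(K,\mu)}^2$ and split the $t$-integral at $T=d(x,y)^{d_w}$, exactly as the paper splits its integral of $|P_tf(x)-P_tf(y)|$ at $\delta=d(x,y)^{d_w}$ (the paper proves the first inequality directly and deduces the second ``by $L^2$ self-duality''; you go the other way, which is legitimate). The small-$t$ and large-$t$ pieces are fine. The genuine gap is the middle range $(T,1)$: your argument hinges on the pointwise Gaussian-weighted H\"older estimate
\[
|p_{t/2}(x,z)-p_{t/2}(y,z)|\le C_\beta\Bigl(\tfrac{d(x,y)}{t^{1/d_w}}\Bigr)^{\beta}\sqrt{p_t(x,z)p_t(y,z)},\qquad \beta\in(0,d_w-d_h),
\]
which you do not prove and which is not what is available off the shelf. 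What the literature cited in the paper (\cite{ABCRST3,BAR}, inequality \eqref{holder semigroup 2}) provides is the unweighted bound $|P_tg(x)-P_tg(y)|\le C\,d(x,y)^{d_w-d_h}t^{-(1-d_h/d_w)}\|g\|_{L^\infty}$; upgrading this to carry the off-diagonal factor $\sqrt{p_t(x,z)p_t(y,z)}$ for $z$ far from $x,y$, with exponent all the way up to $d_w-d_h$, is a substantially stronger statement requiring its own proof. You flag it yourself as ``the main obstacle'', so as written the proof is incomplete at its crucial step.

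The good news is that you do not need that estimate. By duality, $u(t)^{1/2}=\sup_{\|f\|_{L^2}\le 1}|P_{t/2}f(x)-P_{t/2}f(y)|$, and composing \eqref{holder semigroup 2} with ultracontractivity ($\|P_{t}f\|_{L^\infty}\le Ct^{-d_h/(2d_w)}\|f\|_{L^2}$, the paper's Lemma \ref{contPt1}) gives $|P_tf(x)-P_tf(y)|\le C\,d(x,y)^{d_w-d_h}t^{-(1-d_h/(2d_w))}\|f\|_{L^2}$ (Lemma \ref{contPt2}), hence $u(t)\le C\,d(x,y)^{2(d_w-d_h)}t^{-(2-d_h/d_w)}$ for $t\le 1$. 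Inserting this into $\int_T^1 t^{2s-1}u(t)\,dt$, the exponent $2s-3+d_h/d_w$ is $<-1$ precisely when $s<1-\tfrac{d_h}{2d_w}$, the lower endpoint dominates, and it contributes $d(x,y)^{2(d_w-d_h)}\,T^{2s-2+d_h/d_w}=d(x,y)^{2sd_w-d_h}$ as required; the same bound at $t=1$ also feeds your spectral-gap tail estimate. With this substitution your proof closes and coincides, up to dualization, with the paper's.
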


We divide the proof in several lemmas. As usual, we will denote by $C$ constants whose value may change from line to line.

\begin{lemma}
	\label{contPt1}
There exists a constant $C>0$ such that for every $f \in L^2(K,\mu)$, $t > 0$ and $x \in K$,
\[
| P_t f(x) | \le  \frac{C}{t^{\frac{d_h}{2d_w}}} \| f \|_{L^2(K,\mu)}.
\]

\end{lemma}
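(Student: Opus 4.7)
The plan is to carry out a standard derivation of an ultracontractive estimate from on-diagonal heat kernel bounds, in three steps: first I would apply the Cauchy--Schwarz inequality to reduce matters to the $L^2$-norm of $p_t(x,\cdot)$; then I would use the symmetry of the kernel together with the semigroup property to identify that norm with $p_{2t}(x,x)^{1/2}$; finally I would invoke the available heat kernel upper bounds.

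For the first step, I would write
\[
P_t f(x) = \int_K p_t(x,y) f(y)\, d\mu(y),
\]
and apply Cauchy--Schwarz to obtain $|P_t f(x)|^2 \le \|f\|_{L^2(K,\mu)}^2 \int_K p_t(x,y)^2\, d\mu(y)$. For the second step, the symmetry $p_t(x,y)=p_t(y,x)$ together with the Chapman--Kolmogorov identity gives
\[
\int_K p_t(x,y)^2\, d\mu(y) = \int_K p_t(x,y)\, p_t(y,x)\, d\mu(y) = p_{2t}(x,x).
\]

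For the third step, I would split according to the size of $t$. When $2t \in (0,1)$, the on-diagonal case of \eqref{eq:subGauss-upper} yields $p_{2t}(x,x) \le c_3(2t)^{-d_h/d_w}$; taking square roots produces the claimed inequality with an explicit constant depending only on $c_3, d_h, d_w$. When $t \ge 1/2$, Lemma~\ref{cont:pt:tgd} provides the uniform bound $p_{2t}(x,x) \le 1 + Ce^{-2\lambda_1 t}$, and since $t \mapsto t^{-d_h/(2d_w)}$ remains bounded below on any bounded interval, this regime contributes only a harmless constant that can be absorbed into the overall $C$. I do not expect any substantial obstacle here; the argument is essentially the textbook passage from on-diagonal heat kernel upper bounds to ultracontractivity, and the only point requiring a little care is the matching of constants across the two $t$-regimes, using the two heat kernel bounds already established.
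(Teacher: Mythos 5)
Your first two steps (Cauchy--Schwarz, then symmetry and Chapman--Kolmogorov to identify $\int_K p_t(x,y)^2\,d\mu(y)$ with $p_{2t}(x,x)$, then the on-diagonal upper bound) are exactly the paper's argument, and they are correct in the range $2t\in(0,1)$, which is the only range in which \eqref{eq:subGauss-upper} is stated.

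The large-$t$ part of your third step, however, does not work. On the regime $t\ge 1/2$ the target right-hand side $C\,t^{-d_h/(2d_w)}\|f\|_{L^2(K,\mu)}$ tends to $0$ as $t\to\infty$, so the uniform bound $p_{2t}(x,x)\le 1+Ce^{-2\lambda_1 t}$ cannot be ``absorbed into the overall constant''; your justification that $t\mapsto t^{-d_h/(2d_w)}$ is bounded below \emph{on any bounded interval} is beside the point, because $[1/2,\infty)$ is unbounded. Indeed, the inequality as stated fails for large $t$ on all of $L^2(K,\mu)$: take $f\equiv 1$, so that $P_tf\equiv 1$ by stochastic completeness and $\|f\|_{L^2(K,\mu)}=1$, while $C\,t^{-d_h/(2d_w)}\to 0$. (On $L^2_0$ one could instead invoke the spectral gap to get exponential decay of $P_tf$ and rescue the large-$t$ claim.) The statement is really only valid, and is only ever used in the paper, for $t$ in a bounded range: in Lemma~\ref{contPt3} it is applied for $t\le\delta=d(x,y)^{d_w}\le\mathrm{diam}(K)^{d_w}$. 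For $0<t\le T$ with $T$ fixed, your own argument goes through: the on-diagonal bound handles $2t<1$, and on the compact interval $[1/2,T]$ the function $t^{-d_h/(2d_w)}$ genuinely is bounded below, so the uniform bound from Lemma~\ref{cont:pt:tgd} suffices there. The correct fix is thus to restrict the range of $t$ (or the class of $f$), not to patch the constant.
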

\begin{proof}
From Cauchy-Schwarz inequality,
\begin{align*}
| P_t f(x) |^2 &=\left| \int_K p_t (x,z) f(z) d\mu(z) \right|^2 \\
 & \le \int_K p_t (x,z)^2  d\mu(z) \| f \|^2_{L^2(K,\mu)} \\
 & \le p_{2t} (x,x) \| f \|^2_{L^2(K,\mu)}.
\end{align*}
We conclude then with the sub-Gaussian upper bound \eqref{eq:subGauss-upper}.
\end{proof}

\begin{lemma}
	\label{contPt2}
There exists a constant $C>0$ such that for every  $f \in L^2(K,\mu)$, $t > 0$ and  $x,y \in K$,
\[
| P_t f(x) -P_tf(y) | \le C \frac{d(x,y)^{d_w-d_h}}{t^{1 - \frac{d_h}{2d_w}}} \| f \|_{L^2(K,\mu)}.
\]
\end{lemma}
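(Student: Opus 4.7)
The natural approach is to write
\[
P_t f(x) - P_t f(y) = \int_K \bigl(p_t(x,z) - p_t(y,z)\bigr) f(z) \, d\mu(z)
\]
and apply Cauchy-Schwarz, which yields
\[
|P_t f(x) - P_t f(y)|^2 \le \|f\|^2_{L^2(K,\mu)} \int_K \bigl(p_t(x,z) - p_t(y,z)\bigr)^2 d\mu(z).
\]
By the Chapman-Kolmogorov identity applied to the symmetric kernel $p_t$, the integral on the right equals $p_{2t}(x,x) + p_{2t}(y,y) - 2 p_{2t}(x,y)$, so the task is reduced to showing
\[
p_{2t}(x,x) + p_{2t}(y,y) - 2 p_{2t}(x,y) \le C\, \frac{d(x,y)^{2(d_w - d_h)}}{t^{2 - d_h/d_w}}.
\]

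I would split into two regimes according to the ratio $d(x,y)^{d_w}/t$. When $d(x,y)^{d_w} \ge t$, the trivial bound $p_{2t}(x,x) + p_{2t}(y,y) \le C t^{-d_h/d_w}$ coming from \eqref{eq:subGauss-upper} already suffices, since the condition forces $d(x,y)^{2(d_w - d_h)}/t^{2 - d_h/d_w} \ge c\, t^{-d_h/d_w}$. The substantial case is the diffusive regime $d(x,y)^{d_w} \le t$, where one invokes the sharp spatial H\"older estimate
\[
|p_t(x,z) - p_t(y,z)| \le C \left(\frac{d(x,y)}{t^{1/d_w}}\right)^{d_w - d_h} t^{-d_h/d_w} \exp\left(-c \left(\frac{d(y,z)^{d_w}}{t}\right)^{\frac{1}{d_w - 1}}\right),
\]
a refinement of \eqref{eq:subGauss-upper} available from the sub-Gaussian heat-kernel theory on the gasket (see \cite{BP88} and successors). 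Squaring, integrating in $z$, and bounding $\int_K \exp(-c(d(y,z)^{d_w}/t)^{1/(d_w - 1)}) d\mu(z) \le C t^{d_h/d_w}$ via a dyadic annulus decomposition and the Ahlfors regularity \eqref{Ahlfors} produces the target estimate; feeding it back into Cauchy-Schwarz delivers the lemma.

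The main conceptual obstacle is the spatial H\"older continuity of $p_t$ with the sharp exponent $d_w - d_h$. A naive attempt via the Dirichlet-form H\"older bound $|u(x) - u(y)|^2 \le C \mathcal{E}(u,u) d(x,y)^{d_w - d_h}$ applied termwise to the spectral expansion \eqref{spectral} (together with the Weyl-type asymptotics of the eigenvalue counting function) only produces the weaker H\"older exponent $(d_w - d_h)/2$ in $d(x,y)$, so one genuinely needs the sharper pointwise heat-kernel H\"older estimate above, which is by now standard on the Sierpi\'nski gasket.
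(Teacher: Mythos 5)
Your argument is correct in outline but takes a genuinely different route from the paper. The paper's proof is a two-line composition argument: it quotes from \cite{ABCRST3,BAR} the estimate $| P_t g(x) -P_tg(y) | \le C\, d(x,y)^{d_w-d_h}\, t^{-(1 - d_h/d_w)} \| g \|_{L^\infty(K,\mu)}$ for bounded $g$ and all $t>0$, applies it to $g=P_tf$, and invokes Lemma \ref{contPt1} (i.e.\ $\|P_tf\|_{L^\infty}\le C t^{-d_h/(2d_w)}\|f\|_{L^2}$) together with the semigroup property $P_{2t}=P_t P_t$. You instead work at the level of the kernel: Cauchy--Schwarz reduces the claim to bounding $\int_K (p_t(x,z)-p_t(y,z))^2\,d\mu(z)$, which you handle by Chapman--Kolmogorov and the on-diagonal bound in the regime $d(x,y)^{d_w}\ge t$, and by squaring and integrating a pointwise spatial H\"older estimate on $p_t$ in the diffusive regime. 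Your regime-splitting and exponent bookkeeping check out: in the far regime the condition $t\le d(x,y)^{d_w}$ does turn $t^{-d_h/d_w}$ into $d(x,y)^{2(d_w-d_h)}t^{-2+d_h/d_w}$, and in the near regime the Ahlfors-regular integration of the exponential factor yields the extra $t^{d_h/d_w}$ exactly as needed.

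The one point worth flagging is that your key input --- the pointwise H\"older bound on $p_t$ with exponent $d_w-d_h$ \emph{and} the sub-Gaussian off-diagonal factor --- is strictly stronger than anything the paper assumes or proves. From \eqref{eq:subGauss-upper} and \eqref{holder semigroup 2} one only extracts $|p_t(x,z)-p_t(y,z)|\le C d(x,y)^{d_w-d_h}t^{-1}$ with no spatial localization, and, as your own computation shows, dropping the exponential factor ruins the $z$-integration (one gets $t^{-2}$ instead of $t^{-2+d_h/d_w}$); interpolating against the off-diagonal upper bound to recover localization costs half the H\"older exponent. Such a localized H\"older estimate is indeed in the literature for the gasket (\cite{BP88} and its successors), so your proof is valid there, but it is less self-contained: the paper's route uses only \eqref{eq:subGauss-upper} and \eqref{holder semigroup 2}, which is precisely what lets the argument transfer verbatim to the Barlow fractional spaces of Section 4, where the refined kernel estimate is not among the hypotheses. (Both arguments also need the trivial large-time input of Lemma \ref{cont:pt:tgd} if one insists on all $t>0$ rather than $t<1$, since \eqref{eq:subGauss-upper} is stated only for small times; this is a minor point.)
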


\begin{proof}
From  \cite{ABCRST3,BAR}, it is known that  for the Sierpi\'nski gasket there exists a constant $C>0$ such that for every $g \in L^\infty(K,\mu)$, $t > 0$ and $x,y \in K$,
\[
| P_t g(x) -P_tg(y) | \le C \frac{d(x,y)^{d_w-d_h}}{t^{1 - \frac{d_h}{d_w}}} \| g \|_{L^\infty(K,\mu)}.
\]
Now, if $f \in L^2(K,\mu)$, then from the previous lemma $P_t f\in L^\infty (K,\mu)$, so that the previous inequality can be applied to $g=P_tf$. Using the semigroup property this yields
\[
| P_{2t} f(x) -P_{2t}f(y) | \le C \frac{d(x,y)^{d_w-d_h}}{t^{1 - \frac{d_h}{2d_w}}} \| f \|_{L^2(K,\mu)},
\]
which concludes the proof.
\end{proof}

Our third lemma is the following:
\begin{lemma}
	\label{contPt3}
Let $\frac{d_h}{2d_w} < s < 1-\frac{d_h}{2d_w}$. There exists a constant $C>0$ such that for every $f \in L^2(K,\mu)$ and  $x,y \in K$,
\[
\int_0^{+\infty} t^{s-1} | P_t f (x) -P_tf(y)| dt \le Cd(x,y)^{sd_w-\frac{d_h}{2}} \| f \|_{L^2(K,\mu)}.
\]
\end{lemma}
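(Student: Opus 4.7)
The plan is to split the time integral at the natural parabolic scale $t_0 := d(x,y)^{d_w}$ and apply one of the two previous lemmas on each piece.

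For the short-time part $t \in (0, t_0]$, there is no sense in trying to exploit the separation of $x$ and $y$, so I simply bound $|P_tf(x)-P_tf(y)|$ by the triangle inequality and Lemma \ref{contPt1}:
\[
|P_tf(x)-P_tf(y)| \le 2C\, t^{-\frac{d_h}{2d_w}} \|f\|_{L^2(K,\mu)}.
\]
Integrating against $t^{s-1}$ from $0$ to $t_0$ produces $\int_0^{t_0} t^{s-1-\frac{d_h}{2d_w}}\,dt$, which converges at $0$ precisely because $s>\frac{d_h}{2d_w}$ and evaluates to a constant multiple of $t_0^{s-\frac{d_h}{2d_w}}=d(x,y)^{sd_w-\frac{d_h}{2}}$.

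For the long-time part $t \in [t_0, +\infty)$, I use the sharper Lemma \ref{contPt2}:
\[
|P_tf(x)-P_tf(y)| \le C\,\frac{d(x,y)^{d_w-d_h}}{t^{1-\frac{d_h}{2d_w}}}\|f\|_{L^2(K,\mu)}.
\]
Integrating against $t^{s-1}$ from $t_0$ to $\infty$ gives $\int_{t_0}^{\infty} t^{s-2+\frac{d_h}{2d_w}}\,dt$, which converges at infinity exactly because $s<1-\frac{d_h}{2d_w}$ and equals a constant multiple of $t_0^{s-1+\frac{d_h}{2d_w}}$. Multiplying by the prefactor $d(x,y)^{d_w-d_h}$ and substituting $t_0=d(x,y)^{d_w}$, the exponents combine as
\[
(d_w-d_h)+d_w\left(s-1+\tfrac{d_h}{2d_w}\right)=sd_w-\tfrac{d_h}{2},
\]
yielding again a bound of the form $C\,d(x,y)^{sd_w-\frac{d_h}{2}}\|f\|_{L^2(K,\mu)}$.

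Summing the two contributions finishes the proof. The assumed range for $s$ is in fact forced: the lower bound $s>\frac{d_h}{2d_w}$ is exactly what makes the short-time integral converge at $0$, and the upper bound $s<1-\frac{d_h}{2d_w}$ is exactly what makes the long-time integral converge at $\infty$. I do not anticipate a real obstacle; the only nontrivial ingredient is the choice of splitting scale $t_0=d(x,y)^{d_w}$, which is dictated by the parabolic scaling $t \sim d(x,y)^{d_w}$ of the sub-Gaussian heat kernel and which makes the two resulting exponents match, as required.
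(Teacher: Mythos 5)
Your proof is correct and is essentially identical to the paper's: the same split of the time integral, the same use of Lemmas \ref{contPt1} and \ref{contPt2} on the two pieces, and the same choice of splitting scale $d(x,y)^{d_w}$ (which the paper introduces as a parameter $\delta$ and optimizes at the end, arriving at the same value). The exponent bookkeeping matches as well.
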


\begin{proof}
We split the integral into two parts:
\[
\int_0^{+\infty} t^{s-1} | P_t f (x) -P_tf(y)| dt =\int_0^{\delta} t^{s-1} | P_t f (x) -P_tf(y)| dt+\int_\delta^{+\infty} t^{s-1} | P_t f (x) -P_tf(y)| dt
\]
where $\delta >0$ will later be optimized.  First, applying lemma \ref{contPt1}, we have
\begin{align*}
\int_0^{\delta} t^{s-1} | P_t f (x) -P_tf(y)| dt &\le \int_0^{\delta} t^{s-1}( | P_t f (x)| +|P_tf(y)| )dt \\
 & \le \int_0^{\delta} t^{s-1} \frac{C}{t^{\frac{d_h}{2d_w}}} dt \| f \|_{L^2(K,\mu)} \\
 &\le C \delta^{ s- \frac{d_h}{2d_w}} \| f \|_{L^2(K,\mu)}.
\end{align*}
Then, applying lemma \ref{contPt2}, we have
\begin{align*}
\int_\delta^{+\infty} t^{s-1}  | P_t f (x) -P_tf(y)| dt &\le C \int_\delta^{+\infty} t^{s-1} \frac{d(x,y)^{d_w-d_h}}{t^{1 - \frac{d_h}{2d_w}}}  \| f \|_{L^2(K,\mu)}dt \\
 & \le  C d(x,y)^{d_w-d_h} \int_\delta^{+\infty} t^{s-2+ \frac{d_h}{2d_w}} dt \| f \|_{L^2(K,\mu)} \\
 &\le C  d(x,y)^{d_w-d_h} \delta^{s-1+ \frac{d_h}{2d_w}} \| f \|_{L^2(K,\mu)}.
\end{align*}
One concludes
\[
\int_0^{+\infty} t^{s-1} | P_t f (x) -P_tf(y)| dt \le C \left(\delta^{ s- \frac{d_h}{2d_w}}  + d(x,y)^{d_w-d_h} \delta^{s-1+ \frac{d_h}{2d_w}}  \right)\| f \|_{L^2(K,\mu)}.
\]
Choosing then $\delta=d(x,y)^{d_w}$ yields the expected result.
\end{proof}

We are finally ready for the proof of the main theorem:

\begin{proof}
One has
\begin{align*}
\left| \int_K (G_s (x,z)-G_s(y,z))  f(z) d\mu(z) \right|&= C \left|  \int_0^{+\infty} t^{s-1} (P_tf(x)-P_tf(y))  dt \right| \\
 &\le C  \int_0^{+\infty} t^{s-1}  \left|  P_tf(x) -P_tf(y) \right|  dt \\
 & \le  Cd(x,y)^{sd_w-\frac{d_h}{2}} \| f \|_{L^2(K,\mu)}.
\end{align*}
%Therefore, one deduces
%\
%\left| \int_K (G_s (x,z)-G_s(y,z))  f(z) d\mu(z) \right|\le  Cd(x,y)^{sd_w-\frac{d_h}{2}} \| f \|_{L^2(K,\mu)}.
%\]
By $L^2$ self-duality, one concludes
\[
\int_K (G_s (x,z)-G_s(y,z))^2  d\mu(z) \le Cd(x,y)^{2sd_w-d_h}.
\]
\end{proof}

\section{Fractional Brownian fields on the gasket}

\subsection{Reminders on Gaussian measures}

Given a probability space  $\left(\Omega,\mathcal{F},\mathbb{P}\right)$, we consider on the measurable space $(K,\mathcal{K},\mu)$, where $\mathcal{K}$ is the Borel $\sigma$-field on $K$,  a  real-valued Gaussian random measure $W_K\,:\mathcal{K}\rightarrow L^2\left(\Omega,\mathcal{F},\mathbb{P}\right)$ with intensity $\mu$. In other words, $W_K$ is such that 
\begin{itemize}
	\item a.s. $W_K$ is a measure on $(K,\mathcal{K})$ 
	\item for any $A\in\mathcal{K}$ such that $\mu(A)<\infty$,   $W_K(A)$ is a real-valued Gaussian variable with mean zero and variance $\mathbb{E}\left(W_K\left(A\right)^2\right)=\mu(A)$
	\item for any sequence $(A_n)_{n\in\mathbb{N}} \in \mathcal{K}^\mathbb{N}$ of pairwise disjoint measurable sets, the random variables $W_K(A_n)$, $n\in\mathbb{N}$, are independent. 
\end{itemize}
Then for any $f\in L^2(K,\mathcal{K},\mu)$, the stochastic integral 
$$
W_K(f)=\int_{K} f  \,dW_K
$$
is well-defined and is a centered real-valued Gaussian variable, see e.g. \cite[Section 2.3]{MR3466837} for details on the construction. Moreover, denoting by   $H\subset L^2\left(\Omega,\mathcal{F},\mathbb{P}\right)$ the Gaussian Hilbert space spanned by 
$
\{ W_K(A); A\in\mathcal{K}, \mu(A)<\infty\}
$, the functional $W_K:  L^2(K,\mathcal{K},\mu)\rightarrow H$ is an isometry. Hence, for any $f,g\in L^2(K,\mathcal{K},\mu)$, 
\begin{equation}
\label{isometry-property}
\mathbb{E}\left(\int_K f \,dW_K \int_{K} g \,dW_K \right)= \langle f,g\rangle_{L^2(K,\mathcal{K},\mu)}=\int_{K} f g\,d\mu.
\end{equation}

\subsection{Definition and existence of the fractional Brownian field}

\begin{definition}[Fractional Brownian field with parameter $H$]
Let $H\in (0,d_w-d_h)$. We define the fractional Brownian field with  parameter $H$ as the random field given by
\[
X(x)=\int_K  G_s(x,z) \, W_{K} (dz), \, x\in K,
\]
where $s= \frac{d_h+2H}{2d_w}$,  $W_{K}$ is a Gaussian centered real-valued random measure on $L_0^2(K,\mu)$ with intensity $\mu$ and $G_s$ is the Riesz kernel  defined by \eqref{green function}.
 \end{definition}
 \begin{remark}
  Thanks to proposition \ref{integrability G}, the random variable $X(x)$ is well defined for all $x \in K$.
 \end{remark}
 
 \begin{remark}
 Thanks to proposition \ref{relation to frac},  one has for every $f$ which is in the $L^2_0$ domain of the operator $(-\Delta )^s  $
\[
\int_{K} (-\Delta )^s f (x) X(x)  d\mu(x)=\int_{K}  f (x) W_K (dx).
\]
 \end{remark}
 \begin{remark}
 The Gaussian field $(X(x))_{x \in K}$ has mean zero and covariance
 \[
 \mathbb{E}( X(x) X(y)) = \int_K  G_s(x,z) G_s (y,z) d\mu(z)=G_{2s} (x,y).
 \]
 We note that since $2s > d_h/d_w$, from proposition \ref{estimate G}, the function $G_{2s}$ is uniformly bounded on $K \times K$.
 \end{remark}
 \begin{remark}
 One could also consider the  random field given by
\[
\tilde{X}(x)=\int_K  (G_s(x,z)-G_s(q,z)) \, W_{K} (dz).
\]
where $q \in K$ is an arbitrary point of the gasket.
 \end{remark}

\begin{theorem}\label{eq:contvar}
Let $H\in (0,d_w-d_h)$, then  there exists a constant $C>0$ so that for every  $x,y \in K$,
\[
\mathbb{E} ((X(x)-X(y))^2)\le C d(x,y)^{2  H }.
\]
\end{theorem}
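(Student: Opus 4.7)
The proof is essentially a direct application of the results already established, so the plan is short.

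First, I would use the isometry property \eqref{isometry-property} of the Gaussian random measure $W_K$ together with the definition of $X$ to rewrite
\[
\mathbb{E}\bigl((X(x)-X(y))^2\bigr) = \mathbb{E}\left(\left(\int_K (G_s(x,z)-G_s(y,z))\, W_K(dz)\right)^2\right) = \int_K (G_s(x,z)-G_s(y,z))^2 \, d\mu(z).
\]
Here I am using that $z \mapsto G_s(x,z)-G_s(y,z)$ lies in $L^2(K,\mu)$ (in fact in $L_0^2(K,\mu)$), which is guaranteed by Proposition \ref{integrability G} under the assumption $s>\frac{d_h}{2d_w}$.

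Second, I would invoke Theorem \ref{Holder Riesz} to bound this last quantity by $C\, d(x,y)^{2sd_w - d_h}$. For that theorem to apply, one must check that $s$ lies in the correct range $\left(\frac{d_h}{2d_w},\, 1-\frac{d_h}{2d_w}\right)$. Since $s = \frac{d_h+2H}{2d_w}$, this amounts to requiring $H \in (0, d_w - d_h)$, which is exactly the hypothesis of the theorem. Finally, substituting this value of $s$ gives
\[
2sd_w - d_h = (d_h + 2H) - d_h = 2H,
\]
so we obtain $\mathbb{E}\bigl((X(x)-X(y))^2\bigr) \le C\, d(x,y)^{2H}$, as desired.

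There is no real obstacle here: the work has all been done in Theorem \ref{Holder Riesz}, and what remains is the bookkeeping matching of the parameters $s$ and $H$ through the relation $s = \frac{d_h+2H}{2d_w}$, together with the verification that the range constraint on $s$ is exactly equivalent to the stated range $H \in (0, d_w - d_h)$.
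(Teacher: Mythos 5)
Your proof is correct and follows exactly the paper's own argument: the isometry property reduces $\mathbb{E}((X(x)-X(y))^2)$ to $\int_K (G_s(x,z)-G_s(y,z))^2\,d\mu(z)$, and Theorem \ref{Holder Riesz} gives the bound $C\,d(x,y)^{2sd_w-d_h}=C\,d(x,y)^{2H}$. Your explicit verification that $H\in(0,d_w-d_h)$ is equivalent to $s\in\left(\frac{d_h}{2d_w},1-\frac{d_h}{2d_w}\right)$ is a welcome bit of bookkeeping that the paper leaves implicit.
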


\begin{proof}
Since
\[
\mathbb{E} ((X(x)-X(y))^2)=\int_K  (G_s(x,z)-G_s(y,z))^2 d\mu(z)
\]
this follows from Theorem \ref{Holder Riesz}.
\end{proof}

\begin{proposition}
Let $H\in (0,d_w-d_h)$, then the fractional Brownian field $(X(x))_{x \in K}$ with parameter $H$ admits   a spectral expansion
\[
X= \sum_{j=1}^{+\infty} \frac{1}{\lambda_j^s}    N_j \,  \Phi_j
\]
where the $N_i$'s are i.i.d. normal centered Gaussian random variables with variance 1 and the series is convergent in $L^2(K \times \Omega ,\mu \otimes \mathbb{P} )$. 
\end{proposition}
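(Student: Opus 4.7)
The plan is to identify the spectral coefficients of $X$ as stochastic integrals of the eigenfunctions against $W_K$, and then to reduce the convergence statement to the Parseval identity for the kernel $G_s$ on $K\times K$. Throughout, $s=\frac{d_h+2H}{2d_w}$, and the assumption $H\in(0,d_w-d_h)$ translates into $\frac{d_h}{2d_w}<s<1-\frac{d_h}{2d_w}$; in particular $\sum_{j\ge 1}\lambda_j^{-2s}<+\infty$ by the Weyl-type asymptotics $N(t)\sim \Theta(t)t^{d_h/d_w}$ already recorded in Section 2.

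First, I would set $N_j:=\int_K \Phi_j(z)\,W_K(dz)$. Because $\{\Phi_j\}_{j\ge 1}$ is an orthonormal family in $L^2_0(K,\mu)$, the isometry property \eqref{isometry-property} of the white noise gives $\mathbb{E}(N_j N_k)=\delta_{jk}$. Since the family $(N_j)$ lies in the Gaussian Hilbert space generated by $W_K$, it is jointly Gaussian and pairwise uncorrelated, hence i.i.d.\ $\mathcal{N}(0,1)$. This identifies the candidate coefficients.

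Next I would produce the bilinear spectral expansion of $G_s$. From Proposition \ref{relation to frac} we have $\mathcal{G}_s\Phi_j=\lambda_j^{-s}\Phi_j$, and the kernel $G_s$ is symmetric by symmetry of the heat kernel. Since $\{\Phi_j(x)\Phi_k(z)\}_{j,k\ge 1}$ together with the constants generate an orthonormal system in $L^2(K\times K,\mu\otimes\mu)$, computing the Fourier coefficients of $G_s$ yields $\int\!\!\int G_s(x,z)\Phi_j(x)\Phi_k(z)\,d\mu(x)d\mu(z)=\lambda_j^{-s}\delta_{jk}$. Hence, in $L^2(K\times K,\mu\otimes\mu)$,
\[
G_s(x,z)=\sum_{j=1}^{+\infty}\lambda_j^{-s}\Phi_j(x)\Phi_j(z),
\]
and the partial-sum remainder satisfies
\[
\Bigl\|G_s-\sum_{j=1}^n\lambda_j^{-s}\Phi_j\otimes\Phi_j\Bigr\|_{L^2(K\times K)}^{2}=\sum_{j=n+1}^{+\infty}\lambda_j^{-2s}\xrightarrow[n\to\infty]{}0.
\]
The fact that this double series actually equals $G_s$ in $L^2(K\times K)$ (and not only represents the operator $\mathcal{G}_s$) is the only mildly delicate point; it follows from Proposition \ref{integrability G} which ensures $G_s\in L^2(K\times K,\mu\otimes\mu)$.

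Finally, I would put the two pieces together via the isometry. Set $S_n(x):=\sum_{j=1}^n\lambda_j^{-s}N_j\Phi_j(x)$, which, by definition of the $N_j$'s, equals $\int_K\bigl(\sum_{j=1}^n\lambda_j^{-s}\Phi_j(x)\Phi_j(z)\bigr)W_K(dz)$. Since $X(x)=\int_K G_s(x,z)\,W_K(dz)$, the isometry \eqref{isometry-property} gives, for every $x\in K$,
\[
\mathbb{E}\bigl((X(x)-S_n(x))^2\bigr)=\int_K\Bigl(G_s(x,z)-\sum_{j=1}^n\lambda_j^{-s}\Phi_j(x)\Phi_j(z)\Bigr)^{\!2}d\mu(z).
\]
Integrating against $\mu$ in $x$ and applying the Fubini theorem together with the $L^2(K\times K)$ bound obtained above yields
\[
\|X-S_n\|_{L^2(K\times\Omega,\mu\otimes\mathbb{P})}^{2}=\sum_{j=n+1}^{+\infty}\lambda_j^{-2s},
\]
which tends to $0$ as $n\to\infty$, proving the claimed convergence. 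The main obstacle, such as it is, lies in justifying the bilinear expansion of $G_s$ in $L^2(K\times K)$ rigorously; everything else is a direct application of the white-noise isometry.
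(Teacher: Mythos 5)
Your proof is correct and follows essentially the same route as the paper: expand $G_s$ spectrally in $L^2(K\times K,\mu\otimes\mu)$ and transfer the convergence through the white-noise isometry, with $N_j=\int_K\Phi_j\,dW_K$. The only cosmetic difference is that you justify the bilinear expansion of $G_s$ via Parseval, symmetry, and the eigenrelation $\mathcal{G}_s\Phi_j=\lambda_j^{-s}\Phi_j$, whereas the paper obtains it by integrating the heat-kernel spectral expansion \eqref{spectral} in time; your version in fact spells out more detail (the i.i.d.\ property of the $N_j$ and the explicit remainder $\sum_{j>n}\lambda_j^{-2s}$) than the paper's rather terse argument.
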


\begin{proof}
Note that from the expansion \eqref{spectral}, one obtains that $\mu \otimes \mu$ a.e. $x,y \in K$
\[
G_s(x,y)= \sum_{j=1}^{+\infty} \frac{1}{\lambda_j^s} \Phi_j(x) \Phi_j(y)
\]
where the sum on the right hand side is convergent in $L^2(K\times K, \mu \otimes \mu)$. Since the $ \Phi_j$'s form a complete orthonormal system in $L^2_0(K,\mu)$, one easily proves that 
\[
N_j =\int_K  \Phi_j(z) \, W_{K} (dz)
\]
 $N_i$'s are i.i.d. normal centered Gaussian random variables with variance 1.
\end{proof}

\subsection{Regularity of the fractional Brownian field}

Barlow and Perkins have established the following  Garsia-Rodemich-Rumsey inequality for fractal (see lemma 6.1 of \cite{BP88}). 
\begin{lemma}\label{BPlem} Let %$K\subset X$ be a compact set,
	$p$ be an increasing function on $[0,\infty)$ with $p(0)=0,$ and $\psi : \mathbb{R}\rightarrow  \mathbb{R}_+$ be a non-negative symmetric convex function with $\lim_{u\to \infty} \psi(u)=\infty$. Let $f: K\rightarrow  \mathbb{R}$ be a measurable function such that 
	$$
	\Gamma =\int_{K\times K} \psi \left(\frac{\left|f(x)-f(y)\right|}{p(d(x,y))}\right)\mu({\rm d}x)\mu({\rm d}y)
	<+\infty$$
	Then there exists a constant $c_K$ depending only on  $d_h$ such that 
	$$
	\left|f(x)-f(y)\right|\le 8\int_0^{d(x,y)} \psi^{-1}\left(\frac{c_K \Gamma}{u^{2d_h}}\right) \,p({\rm d}u)
	$$
	for $\mu\times \mu$ almost all $x,y\in K\times K$. 
	\end{lemma}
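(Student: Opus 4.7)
The plan is to follow the classical Garsia--Rodemich--Rumsey scheme, the only modification being that the volume exponent $n$ of Lebesgue measure on $\mathbb{R}^n$ is replaced throughout by the Hausdorff dimension $d_h$, justified by the Ahlfors regularity \eqref{Ahlfors}. The factor $u^{2d_h}$ in the conclusion comes precisely from the product $\mu(B(x,u))\cdot \mu(B(y,u)) \asymp u^{2d_h}$ that appears in the denominators when one uses Chebyshev's inequality twice at each dyadic scale.

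First I would set $\Phi(x,y):=\psi(|f(x)-f(y)|/p(d(x,y)))$ so that $\Gamma=\iint_{K\times K}\Phi\,d\mu\,d\mu$, and define $I(x):=\int_K \Phi(x,y)\,\mu(dy)$. Since $\int_K I\,d\mu=\Gamma$, we have $I(x)<\infty$ for $\mu$-a.e.\ $x$, and it suffices to prove the bound for pairs $(x_0,y_0)$ of such good points with $d_0:=d(x_0,y_0)>0$. Second, I would construct a dyadic chain: let $r_n=d_0\cdot 2^{-n}$ and $B_n=B(x_0,r_n)$, and, inductively, select $x_{n+1}\in B_{n+1}$ such that simultaneously
\begin{itemize}
\item[(a)] $\displaystyle \int_{B_{n+1}}\Phi(x_{n+1},w)\,\mu(dw)\le \frac{2}{\mu(B_{n+1})}\int_{B_{n+1}}\!\!\int_{B_n}\Phi\,d\mu\,d\mu\le \frac{2\Gamma}{\mu(B_{n+1})}$,
\item[(b)] $\displaystyle \Phi(x_n,x_{n+1})\le \frac{2}{\mu(B_{n+1})}\int_{B_{n+1}}\Phi(x_n,w)\,\mu(dw)\le \frac{C\,\Gamma}{\mu(B_{n+1})\mu(B_n)}$,
\end{itemize}
where (a) is an inductive hypothesis used to pass to the next step and (b) follows from it at the previous scale. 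Each of the bad sets where (a) or (b) fails has measure at most $\mu(B_{n+1})/2$ by Chebyshev, so the good set is nonempty. Third, applying $\psi^{-1}$ to (b) and using Ahlfors regularity $\mu(B_n)\ge c\,r_n^{d_h}$ yields
\[
|f(x_n)-f(x_{n+1})|\le p(d(x_n,x_{n+1}))\,\psi^{-1}\!\left(\frac{c_K\Gamma}{r_{n+1}^{2d_h}}\right)\le p(2r_n)\,\psi^{-1}\!\left(\frac{c_K\Gamma}{r_{n+1}^{2d_h}}\right).
\]
Telescoping and letting $n\to\infty$ (with $f(x_n)\to f(x_0)$ along the chain, which is the output of the construction at full-measure starting points), then comparing the resulting geometric sum to the Riemann--Stieltjes integral $\int_0^{d_0}\psi^{-1}(c_K\Gamma/u^{2d_h})\,p(du)$ via monotonicity of $\psi^{-1}$ and $p$, gives half of the claimed inequality. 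Carrying out the symmetric construction starting from $y_0$, and bounding the single "bridge" term $|f(x_N)-f(y_N)|$ for $N$ large, yields the factor $8$.

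The technical heart of the argument, and the only place where real care is needed, is the selection step: the inductive hypothesis on $x_n$ must be quantitatively strong enough (namely a bound on the restricted integral $\int_{B_n}\Phi(x_n,w)\,\mu(dw)$, not merely $I(x_n)<\infty$) for Chebyshev to produce a new good point $x_{n+1}$ at the finer scale while simultaneously controlling the increment $|f(x_n)-f(x_{n+1})|$. The Ahlfors bound \eqref{Ahlfors} enters exactly here, since the threshold for "good" must scale like $\Gamma/\mu(B_n)^2 \asymp \Gamma/r_n^{2d_h}$; tracking the absolute constant $c_K$ through this double Chebyshev shows that it depends only on $d_h$ (through the ratio $C/c$ of the Ahlfors constants and the factor $2$ used in the union bound), which is the content of the stated dependence.
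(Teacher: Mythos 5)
First, a point of orientation: the paper does not prove this lemma at all --- it is quoted verbatim as Lemma 6.1 of Barlow--Perkins \cite{BP88} --- so there is no internal proof to compare against. Your sketch follows the classical Garsia--Rodemich--Rumsey chaining scheme with Ahlfors regularity replacing Lebesgue volume, which is indeed the right strategy and essentially what Barlow--Perkins do; in particular your double-Chebyshev selection step is set up correctly (the inductive bound $\int_{B_{n+1}}\Phi(x_{n+1},w)\,\mu(dw)\le 2\Gamma/\mu(B_{n+1})$ does close, and the union bound over the two bad sets leaves a nonempty good set).

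Three steps, however, would fail as written. \emph{(1) The bridge is at the wrong end of the chain.} With $x_N\in B(x_0,r_N)$ and $y_N\in B(y_0,r_N)$ one has $d(x_N,y_N)\to d(x_0,y_0)>0$, while the only Chebyshev control available is $\Phi(x_N,y_N)\le 4\Gamma/\bigl(\mu(B(x_0,r_N))\mu(B(y_0,r_N))\bigr)\asymp\Gamma r_N^{-2d_h}\to\infty$; hence $|f(x_N)-f(y_N)|\le p(d(x_N,y_N))\,\psi^{-1}(\cdots)$ diverges as $N\to\infty$. The two chains must be joined at the \emph{coarsest} scale, or issue from a common point as in Garsia's interval proof. \emph{(2) Purely dyadic radii do not give the Stieltjes comparison for general $p$.} Monotonicity yields $\int_{r_{n+1}}^{r_n}\psi^{-1}(c\Gamma/u^{2d_h})\,p({\rm d}u)\ge\psi^{-1}(c\Gamma/r_n^{2d_h})\bigl(p(r_n)-p(r_{n+1})\bigr)$, so the integral dominates the sum only if $p(2r_n)\lesssim p(r_n)-p(r_{n+1})$; for a $p$ that is essentially flat across many dyadic scales the ratio of your sum to the integral grows like the number of active scales. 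The classical fix is to choose the scales adapted to $p$, namely $p(d_{n+1})\le\tfrac12 p(d_n)$, which gives $p(d_{n-1})\le 4\bigl(p(d_n)-p(d_{n+1})\bigr)$ and hence the universal constant $8$. With $p(u)=u^H$ (the only case the paper uses) your dyadic choice works, but the resulting constant depends on $H$, not only on $d_h$ as the lemma asserts. \emph{(3) The endpoint identification is unjustified.} Since $f$ is merely measurable, $x_n\to x_0$ gives no information about $f(x_n)$, so "$f(x_n)\to f(x_0)$ at full-measure starting points" is an assertion, not a proof; and if you instead start the chain at $x_0$ itself, the first increment is controlled only by $I(x_0)=\int_K\Phi(x_0,w)\,\mu({\rm d}w)$, which is finite a.e.\ but not bounded in terms of $\Gamma$. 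Making the $\mu\times\mu$-a.e.\ conclusion rigorous needs a genuine extra argument (prove the bound for continuous functions and pass to ball averages, or a Lebesgue-point argument) --- exactly the difficulty the paper itself handles separately in Steps 2--4 of the proof of Theorem \ref{regularite}.
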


The main result of this section is the following.
\begin{theorem}\label{regularite}
	There exists a modification $X^*$ of $X$ such that  
	$$
	\lim_{\delta \to 0}\underset{\underset{x,y\in K}{\scriptsize d(x,y)}\le \delta}{\sup}\  \frac{\left| X^*(x)-X^*(y)\right|}{d(x,y)^H \sqrt{\left|\ln d(x,y)\right|}} <+\infty	$$ 
	\end{theorem}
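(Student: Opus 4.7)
The plan is to invoke the Barlow--Perkins Garsia--Rodemich--Rumsey inequality (Lemma \ref{BPlem}) with an exponential choice of $\psi$ tailored to the Gaussian tails of the increments, and then extract a pointwise modulus of continuity by estimating the resulting logarithmic integral. Since Theorem \ref{eq:contvar} gives $\mathbb{E}[(X(x)-X(y))^2]\le C d(x,y)^{2H}$, the centered Gaussian variable $(X(x)-X(y))/d(x,y)^H$ has variance at most $C$, so its moment generating function applied to the square is uniformly controlled.

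Concretely, I would first select a jointly measurable version of $X$ (which exists because $K$ is compact separable and $X$ is $L^2$-continuous by Theorem \ref{eq:contvar}) and then apply Lemma \ref{BPlem} with
\[
p(u)=u^H, \qquad \psi(u)=\exp\!\left(\frac{u^2}{4C}\right)-1, \qquad \psi^{-1}(v)=2\sqrt{C\,\ln(1+v)}.
\]
A direct Gaussian computation gives $\mathbb{E}\bigl[\psi(|X(x)-X(y)|/d(x,y)^H)\bigr]\le \sqrt{2}-1$ uniformly in $x,y$, so Fubini yields $\mathbb{E}[\Gamma]\le (\sqrt{2}-1)\mu(K)^2<\infty$ and hence $\Gamma<\infty$ almost surely. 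Lemma \ref{BPlem} then gives, for $\mu\otimes\mu$-almost every $(x,y)$,
\[
|X(x)-X(y)|\le 16\sqrt{C}\,H\int_0^{d(x,y)} \sqrt{\ln\!\bigl(1+c_K\Gamma\, u^{-2d_h}\bigr)}\,u^{H-1}\,du.
\]

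The key estimate is the behavior of this integral as $r=d(x,y)\to 0$. Writing $\ln(1+c_K\Gamma u^{-2d_h})\le \ln(1+c_K\Gamma)+2d_h|\ln u|$ and using $\sqrt{a+b}\le \sqrt{a}+\sqrt{b}$, an integration by parts of $\int_0^r u^{H-1}\sqrt{|\ln u|}\,du$ (splitting $u^{H-1}\,du = d(u^H/H)$ and differentiating $\sqrt{|\ln u|}$) shows that for every $\omega$ with $\Gamma(\omega)<\infty$ there exists $\delta_0(\omega)>0$ and $C'(\omega)$ such that
\[
|X(x)-X(y)|\le C'(\omega)\, d(x,y)^H\sqrt{|\ln d(x,y)|}\qquad\text{whenever } d(x,y)\le\delta_0(\omega).
\]

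To promote the $\mu\otimes\mu$-a.e.\ bound to a pointwise one, I would fix a countable dense subset $D\subset K$ (which exists and meets the full-measure set just obtained, since $\mu$ is Ahlfors regular with full support), so that the displayed inequality holds simultaneously on $D\times D$ almost surely. The right-hand side is a modulus of continuity on $D$, so $X\vert_D$ extends uniquely to a continuous field $X^*$ on $K$ satisfying the same bound. That $X^*$ is a modification follows from the $L^2$-continuity of $X$: for any $x\in K$ and any sequence $x_n\to x$ in $D$, one has $X(x_n)\to X(x)$ in $L^2$ while $X(x_n)=X^*(x_n)\to X^*(x)$ almost surely, forcing $X(x)=X^*(x)$ a.s. The main obstacle is obtaining precisely the $\sqrt{|\ln d(x,y)|}$ correction rather than a cruder polynomial loss; this is what forces the exponential choice of $\psi$ (using the Gaussian tails fully) and requires the careful handling of the non-polynomial $\psi^{-1}$ near $u=0$ in the GRR integral.
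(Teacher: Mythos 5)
Your Step 1 --- the choices $p(u)=u^H$ and $\psi(u)=\exp\left(u^2/(4C)\right)-1$ in the Barlow--Perkins inequality, the uniform bound on $\mathbb{E}\left[\psi\left(|X(x)-X(y)|/d(x,y)^H\right)\right]$ obtained from Theorem \ref{eq:contvar} and Gaussian tails, the conclusion $\Gamma<\infty$ a.s., and the estimate of $\int_0^r\sqrt{\ln\left(1+c_K\Gamma u^{-2d_h}\right)}\,u^{H-1}\,du$ by $r^H\sqrt{|\ln r|}$ via integration by parts --- is exactly the paper's Step 1, and your computations there are correct.

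The divergence, and the gap, lies in how you upgrade the $\mu\otimes\mu$-a.e.\ bound to an everywhere-defined modification. The exceptional set produced by Lemma \ref{BPlem} is $\omega$-dependent: for each $\omega$ in a full-probability event you get a set $J(\omega)$ with $\mu(K\setminus J(\omega))=0$ outside of which the bound may fail. You cannot simply ``fix a countable dense subset $D\subset K$'' and assert that the inequality holds on $D\times D$ almost surely: each fixed point of $K$ is $\mu$-null, so nothing established so far guarantees it belongs to $J(\omega)$ for a.e.\ $\omega$. To salvage your route you would need a Fubini argument on $K\times K\times\Omega$ (using the jointly measurable version) to show that the set of pairs $(x,y)$ for which the bound holds a.s.\ has full $\mu\otimes\mu$-measure, followed by an inductive selection of a countable dense $D$ all of whose pairs lie in that set --- note that a full-measure subset of $K\times K$ need not contain $D\times D$ for an arbitrarily chosen dense $D$. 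This is fixable, but it is precisely the technicality the paper's construction avoids: the paper sets $X_n(x)=\mu(B(x,1/n))^{-1}\int_{B(x,1/n)}X\,d\mu$, which is defined for every $x$ and every $\omega$, proves $(X_n(x,\omega))_n$ is Cauchy by using the a.e.\ bound only inside integrals against $\mu\otimes\mu$ (where the null set is harmless), defines $X^*=\lim_n X_n$, and identifies $X^*(x)=X(x)$ a.s.\ through $\mathbb{E}\left[|X_n(x)-X(x)|^2\right]\le Cn^{-2H}$ from Theorem \ref{eq:contvar}. Your final identification step ($L^2$-continuity of $X$ plus a.s.\ convergence along $D$) is sound once $D$ has been correctly selected.
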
  
	
	\begin{proof}

		{\bf Step 1: Control of $\boldsymbol{X(x)-X(y)}$ for a.e  $\boldsymbol{\omega}$ and for $\boldsymbol{\mu}$ a.e. $\boldsymbol{x,y\in K}$}
		
		Let us consider $p(u)=u^H$ and $\psi(u)=\exp(\frac{u^2}{4c^2})-1$ where $c>0$ will be chosen later.   
Let 
$$
\Gamma(\omega) =\int_{K\times K} \psi \left(\frac{\left|X(x,\omega)-X(y,\omega)\right|}{p(d(x,y))}\right)\mu({\rm d}x)\mu({\rm d}y).
$$
Then by Fubini Theorem, 
$$
\mathbb{E}\left(\Gamma\right)=\int_{K\times K} \mathbb{E}\left(\psi \left(\frac{\left|X(x)-X(y)\right|}{p(d(x,y))}\right)\right)\mu({\rm d}x)\mu({\rm d}y)
$$		
Let us consider $x,y\in K$ such that $x\ne y$.	Since $\psi$ is non-negative,
	$$
	\mathbb{E}\left(\psi \left(\frac{\left|X(x)-X(y)\right|}{p(d(x,y))}\right)\right)=\int_{0}^{\infty} \mathbb{P}\left(\psi\left(\frac{\left|X(x)-X(y)\right|}{p(d(x,y))}\right)>t\right) {\rm d}t 
	$$	
	so that by definition of $\psi$, 
$$
\mathbb{E}\left(\psi \left(\frac{\left|X(x)-X(y)\right|}{p(d(x,y))}\right)\right)=\int_1^\infty \mathbb{P}\left(\left|X(x)-X(y)\right|\ge 2c\sqrt{\log(t)}\, p(d(x,y))\right) {\rm d}t 
$$
Hence, by  Theorem  \ref{eq:contvar},  there exists a finite positive constant $C$ such that  for every  $x,y\in K$
$$
 p(d(x,y))= {d(x,y)^{H }}\ge \frac{\sqrt{\mathbb{E}(\left(X(x)-X(y)\right)^2}}{C}.
$$
Hence choosing $c=C$, we obtain that for every $x,y\in K$ 
\begin{align*}
 & \mathbb{E}\left(\psi \left(\frac{\left|X(x)-X(y)\right|}{p(d(x,y))}\right)\right) \\
\le & \int_1^\infty \mathbb{P}\left(\left|X(x)-X(y)\right|\ge 2\sqrt{\log(t)}\, \sqrt{\mathbb{E}(\left(X(x)-X(y)\right)^2)}\right) {\rm d}t 
\end{align*}
Let us now recall that  for $Z$ a standard Gaussian random variable,
$$\forall \lambda>0,\ 
\mathbb{P}\left(|Z|\ge \lambda \right)\le \sqrt{\frac{2}{\pi}}\frac{\exp\left(-\frac{\lambda^2}{2}\right)}{\lambda}
$$
Therefore since $X(x)-X(y)$ is a centered Gaussian random variable with variance $\mathbb{E}(\left(X(x)-X(y)\right)^2)\ne 0$, 
$$
\displaystyle \mathbb{E}\left(\psi \left(\frac{\left|X(x)-X(y)\right|}{p(d(x,y))}\right)\right)\le\displaystyle  \textup{e}+\frac{1}{\sqrt{2\pi}}\int_{\textup{e}}^{\infty} \frac{1}{t^2 \sqrt{\log(t)}}{\rm d}t\le  \displaystyle \textup{e} +\frac{1}{\sqrt{2\pi}}
$$
for every  $x,y\in K$ such that $x\ne y$.  
Then 
$$
\mathbb{E}\left(\Gamma\right) \le \left( \textup{e} +\frac{1}{\sqrt{\pi}}\right)\mu(K)^2<\infty.
$$
Hence, there exists $\tilde{\Omega}$ such that $\mathbb{P}(\tilde{\Omega})=1$ and for all $\omega\in\tilde{\Omega}$ $\Gamma(\omega)<\infty$. By applying Lemma \ref{BPlem},  for all $\omega \in \tilde{\Omega}$,  there exists a finite constant $ c_K$ which only depends on  $d_h$ and  $J(\omega)\subset K$ with $\mu(K\backslash J(\omega))=0$ such that  
$$
\left|X(x)-X(y)\right|\le 8\int_0^{d(x,y)} \psi^{-1}\left(\frac{c_K \Gamma}{u^{2d_h}}\right) \,p({\rm d}u)=16 c\int_0^{d(x,y)} \sqrt{\log\left(1+\frac{c_K \Gamma}{u^{2d_h}}\right)} \,p({\rm d}u)
$$
for every $\omega \in\tilde{\Omega}$ and $x,y\in J(\omega)$. Then there exists a finite positive random variable $A$ such that for every $\omega \in\tilde{\Omega}$ and $x,y\in J(\omega)$, 
$$
\left|X(x)-X(y)\right|\le A\int_0^{d(x,y)} \sqrt{|\log(u)|} \, p({\rm d}u)
$$ 
Hence for $x,y\in J(\omega)$ such that $d(x,y)\le 1/\textup{e}$, using integration by parts, up to change $A$ at each line, 
$$\begin{array}{rcl}
\displaystyle \left|X(x)-X(y)\right|&\le& \displaystyle  A d(x,y)^{H}\sqrt{-\log(d(x,y))} +A\int_0^{d(x,y)}\frac{u^{H-1}}{\sqrt{-\log(u)}} \, {\rm d}u\\[15pt]
&\le & \displaystyle Ad(x,y)^{H}\sqrt{-\log(d(x,y))}+Ad(x,y)^{H}.\end{array}
$$
Therefore, setting $\varphi(u)=u^{H}\sqrt{-\log(u)}$ for $u>0$, 
\begin{equation}
\label{eq:controlae}
\displaystyle \left|X(x)-X(y)\right|
\le\displaystyle A  \varphi(d(x,y))
\end{equation}
for all $x,y\in J(\omega)$ such that $d(x,y)\le 1/\textup{e}$.  
 {Since $H>0$, let us remark that  we can choose $r\in (0,1/\textup{e})$ such that $\varphi$ is a non-decreasing function on $(0,r)$.}\\

{\bf Step 2: Definition of $\boldsymbol{X^*}$}

Let us consider $\omega\in \tilde{\Omega}$ and $x\in K$.  For any $n\in\mathbb{N}^*$, let $B_n(x)=B\left(x,\frac{1}{n}\right)=\{y\in K\,/\, d(x,y)\le \frac{1}{n}\}$ and 
$$
X_n(x,\omega)=\frac{1}{\mu\left(B_n\left(x\right)\right)}\int_{B_n\left(x\right)}X(u,\omega)\,\mu({\rm d}u). 
$$
Note that for any integer $n,m$, 
\begin{align*}
& X_n(x,\omega)-X_{m}(x,\omega) \\
=&\frac{1}{\mu\left(B_n\left(x\right)\right)\mu\left(B_m\left(x\right)\right)}\int_{B_n\left(x\right)}\int_{B_m\left(x\right)}\left(X(u,\omega)-X(v,\omega)\right)\,\mu({\rm d}u)\mu({\rm d}v).
\end{align*}

Since $\mu\left(K\backslash J(\omega)\right)=0$ and since $\varphi$ is a non-decreasing function on $(0,r)$, by applying \eqref{eq:controlae}, we have:   
$$
\left|X_n(x,\omega)-X_{m}(x,\omega)  \right|\le A\varphi\left(\frac{2}{n}\right)
$$
for any $n>2/r  $ and any  $m\ge n$. 
Since $\lim_{r\to 0_+}\varphi(r)=0$,  $(X_n(x,\omega))_n$ is a real-valued Cauchy sequence and then converges. Then for $\omega\in\tilde{\Omega}$ and $x\in K$, we define 
$$
X^*(x,\omega):=\lim_{n\to +\infty} X_n(x,\omega).
$$
If $\omega\notin \tilde{\Omega}$, we set  $X^*(x,\omega)=0$ for every $x\in K$. \\

{\bf Step 3 : Upper bound for the modulus of continuity of $\boldsymbol{X^*}$} 

Let us first assume that $\omega\in \tilde{\Omega}$. Then for any $x,y\in K$, 
\begin{align*}
 & X_n(x,\omega)-X_{n}(y,\omega) \\
 =&\frac{1}{\mu\left(B_n\left(x\right)\right)\mu\left(B_n\left(y\right)\right)}\int_{B_n\left(x\right)}\int_{B_n\left(y\right)}\left(X(u,\omega)-X(v,\omega)\right)\,\mu({\rm d}u)\mu({\rm d}v)
\end{align*}
Let $x,y\in K$ such that $0<d(x,y)<r/2$. Then for $u\in B_n(x)$ and $v\in B_n(x)$, if $n>4/r$, 
$$
d(u,v)\le d(x,y)+\frac{2}{n}<r
$$
Hence,  for $n>4/r$, 
$$
\varphi\left(d(u,v)\right)\le \varphi\left(d(x,y)+\frac{2}{n}\right)
$$
and, since $\mu\left(K\backslash J(\omega)\right)=0$,  by \eqref{eq:controlae}, 
$$
\left|X_n(x,\omega)-X_{n}(y,\omega)\right|\le A \varphi\left(d(x,y)+\frac{2}{n}\right).
$$
 Letting $n\to +\infty$,  by continuity of $\varphi$ on $(0,r)$, we obtain that  
 $$
 \left|X^*(x,\omega)-X^*(y,\omega)\right|\le A \varphi\left(d(x,y)\right).
 $$
 for every $x,y\in K$ such that $d(x,y)\le r/2$. 
 Note that this last inequality also holds if $\omega\notin \tilde{\Omega}$.  
\\

{\bf Step 4: Comparison of $\boldsymbol{X}$ and $\boldsymbol{X^*}$ }

Let $x\in K$. Note that 
$$
X_n(x)-X(x)=\frac{1}{\mu\left(B_n(x)\right)}\int_{B_n(x)}\left(X(u,\omega)-X(x,\omega)\right)\mu({\rm d}u)
$$
Therefore, by Cauchy-Schwarz inequality, 
$$
\left|X_n(x)-X(x)\right|^2\le \frac{1}{\mu\left(B_n(x)\right)}\int_{B_n(x)}\left|X(u,\omega)-X(x,\omega)\right|^2\mu({\rm d}u)
 $$
Hence, applying Fubini theorem,
 $$
\mathbb{E}\left( \left|X_n(x)-X(x)\right|^2\right)\le\frac{1}{\mu\left(B_n(x)\right)} \int_{B_n(x)}\mathbb{E}\left(\left|X(u,\omega)-X(x,\omega)\right|^2\right)\mu({\rm d}u)
 $$
 By Theorem \ref{eq:contvar}, for every $x\in K$, and all $n\in\mathbb{N}^*$, 
 $$\begin{array}{rcl}
\displaystyle  \mathbb{E}\left( \left|X_n(x)-X(x)\right|^2\right)&\le&\displaystyle \frac{C}{\mu\left(B_n(x)\right)} \int_{B_n(x)}d(u,x)^{2H}\mu({\rm d}u)\\[15pt]
&\le & \frac{C}{n^{2H}}.
\end{array}
  $$
   Then for every $x\in K$, $(X_n(x))_n $ converges in quadratic mean  to $X(x)$, which implies that  $X(x)=X^*(x)$ a.s. In other words, $X^*$ is a modification of $X$, which concludes the proof.  
		\end{proof}

\subsection{Invariance and scaling properties of the fractional Brownian field}

\subsubsection{Invariance by symmetries}

The Sierpi\'nski gasket admits 3 symmetries $\sigma_1,\sigma_2,\sigma_3$ which are the reflections about the lines dividing the triangle with vertices $q_0,q_1,q_2$ into two equal parts.

\begin{proposition}
 Let $H\in (0,d_w-d_h)$ and
\[
X(x)=\int_K  G_s(x,z) \, W_{K} (dz), \, x\in K,
\]
be the fractional Brownian field with parameter $H$. Then, for every $i=1,2,3$ in distribution
\[
(X(\sigma_i(x))_{x \in K}=^{d} (X(x))_{x \in K}.
\]
\end{proposition}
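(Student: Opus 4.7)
Since $(X(x))_{x\in K}$ is a centered Gaussian field, so is $(X(\sigma_i(x)))_{x\in K}$, and equality in distribution reduces to equality of the covariance functions. The plan is therefore to prove that the Riesz kernel $G_s$ (and hence $G_{2s}$) is invariant under the diagonal action of each $\sigma_i$, i.e.
\[
G_s(\sigma_i(x),\sigma_i(y))=G_s(x,y),\qquad x,y\in K,
\]
and then read off the conclusion from the covariance formula
$\mathbb{E}(X(x)X(y))=G_{2s}(x,y)$ recorded in the remark after the definition of the field.

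The invariance of $G_s$ will follow from the invariance of the heat kernel $p_t$, via the spectral representation
$
G_s(x,y)=\frac{1}{\Gamma(s)}\int_0^\infty t^{s-1}(p_t(x,y)-1)\,dt.
$
To establish $p_t(\sigma_i(x),\sigma_i(y))=p_t(x,y)$, I would argue that each $\sigma_i$ is an isometry of $(K,d)$ preserving the self-similar cell structure $F_{j_1}\circ\cdots\circ F_{j_n}(K)$ (it merely permutes the cells), hence preserves the normalized Hausdorff measure $\mu$ and the approximating quadratic forms $\mathcal E_n$. Passing to the limit in \eqref{dirichlet limit}, $\sigma_i$ preserves the canonical Dirichlet form $(\mathcal E,\mathcal F)$; consequently the associated semigroup and heat kernel are invariant, i.e.\ $p_t(\sigma_i(x),\sigma_i(y))=p_t(x,y)$.

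The final step is a change of variables in the white-noise integral. Since $\sigma_i$ is measure-preserving on $(K,\mathcal K,\mu)$, the pushforward $\widetilde W_K:=(\sigma_i)_*W_K$ defined by $\widetilde W_K(A):=W_K(\sigma_i^{-1}(A))$ is again a centered real-valued Gaussian random measure with intensity $\mu$, and hence has the same law as $W_K$. Then
\[
X(\sigma_i(x))=\int_K G_s(\sigma_i(x),z)\,W_K(dz)
=\int_K G_s(\sigma_i(x),\sigma_i(z'))\,\widetilde W_K(dz')
=\int_K G_s(x,z')\,\widetilde W_K(dz'),
\]
using the invariance of $G_s$ in the last equality. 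Since $\widetilde W_K\stackrel{d}{=}W_K$, the right-hand side has, as a process in $x$, the same law as $(X(x))_{x\in K}$, which is the desired conclusion.

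No serious obstacle arises here; the only point requiring mild care is to make sure the symmetries $\sigma_i$ genuinely preserve the self-similar data used to construct $\mu$ and $\mathcal E$ (which is immediate since $\sigma_i$ permutes $\{q_0,q_1,q_2\}$ and hence intertwines the maps $F_j$), so that all the transport of structure invoked above is legitimate.
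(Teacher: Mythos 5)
Your argument is correct, and its core is the same as the paper's: the paper simply asserts that the Dirichlet form is $\sigma_i$-invariant, deduces $p_t(\sigma_i(x),\sigma_i(y))=p_t(x,y)$, hence $G_{2s}(\sigma_i(x),\sigma_i(y))=G_{2s}(x,y)$, and concludes by equality of covariances for centered Gaussian fields. You add two things the paper leaves implicit. First, you justify the Dirichlet form invariance from the cell structure and the approximating forms $\mathcal E_n$, which is a genuine (if routine) gap-fill, since one must also note that $\sigma_i$ preserves $\mu$. Second, you give an alternative, essentially pathwise route: push forward the white noise by the measure-preserving involution $\sigma_i$ and use $G_s(\sigma_i(x),\sigma_i(z))=G_s(x,z)$ directly in the stochastic integral. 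This second route is logically redundant once the covariance identity is in hand (for centered Gaussian fields covariance determines the law), but it is more robust in that it would survive in non-Gaussian settings where the noise law is $\sigma_i$-invariant; either route alone suffices here.
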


\begin{proof}
The Dirichlet form $\mathcal E$ on the gasket is invariant by $\sigma_i$, i.e. for every $ f\in \mathcal F$
\[
\mathcal{E}( f\circ \sigma_i, f\circ \sigma_i)=\mathcal{E}( f, f).
\]
Thus, for every $x,y \in K$, $p_t( \sigma_i(x), \sigma_i(y))= p_t(x,y)$. This implies that $G_{2s}( \sigma_i(x), \sigma_i(y))= G_{2s}(x,y)$ and thus $\mathbb{E}( X(\sigma_i(x)) X(\sigma_i(y)) =\mathbb{E}(X(x)X(y))$.
\end{proof}

In particular, at the vertices, one obtains that $X(q_0),X(q_1),X(q_2)$ have the same distribution.
\subsubsection{Invariance by scaling}

Let $w=(i_1, \cdots, i_n) \in \{ 1,2,3 \}^n $, and denote $F_w=F_{i_1} \circ \cdots \circ F_{i_n}$ where we recall that 
\[
F_i(z)=\frac{1}{2}(z-q_i)+q_i.
\]
The compact set $K_w:=F_w(K) \subset K$ is itself a Sierpi\'nski gasket. Denote by $X^w$ a fractional Brownian motion field with parameter $H$ on $K_w$.

\begin{proposition}
The Gaussian field $(2^{nH} X^w( F_w (x)))_{x \in K}$ is a fractional Brownian motion field with parameter $H$ on $K$.
\end{proposition}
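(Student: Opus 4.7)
Both $X$ and $Y(x):=2^{nH}X^w(F_w(x))$ are centered Gaussian fields indexed by $K$, so the plan is to identify their laws by showing that the corresponding spectral expansions coincide. Here $X^w$ is understood to be built from the intrinsic self-similar structure on $K_w$: the inherited (unnormalized) Hausdorff measure $\mu|_{K_w}$ of total mass $3^{-n}$, the inherited self-similar Dirichlet form, the associated Riesz kernel $G^{K_w}_s$, and a white noise on $L^2_0(K_w,\mu|_{K_w})$.

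The central step is the scaling of the spectral data from $K$ to $K_w$. Combining the iterated self-similarity
\[
\mathcal{E}(f,f)=(5/3)^n\sum_{|v|=n}\mathcal{E}(f\circ F_v,f\circ F_v)
\]
with the measure relation $\mu(F_w(A))=3^{-n}\mu(A)$, a direct computation yields $(\Delta_w g)(F_w(x))=5^n\,\Delta(g\circ F_w)(x)$ for the Laplacian $\Delta_w$ on $(K_w,\mu|_{K_w})$. Consequently $-\Delta_w$ has eigenvalues $\lambda_j^{K_w}=5^n\lambda_j$, with $L^2(\mu|_{K_w})$-orthonormal eigenfunctions satisfying $\Phi_j^{K_w}(F_w(x))=3^{n/2}\Phi_j(x)$ (the factor $3^{n/2}$ absorbing the $3^{-n}$ mass of $\mu|_{K_w}$).

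Plugging these into the spectral expansion of $X^w$ and evaluating at $F_w(x)$ gives
\[
2^{nH}X^w(F_w(x))=\bigl(2^{nH}\cdot 3^{n/2}\cdot 5^{-ns}\bigr)\sum_{j\ge 1}\lambda_j^{-s}\,N_j^w\,\Phi_j(x),
\]
where $(N_j^w)_{j\ge 1}$ are i.i.d.\ standard Gaussians. Using $3=2^{d_h}$, $5=2^{d_w}$ and the defining relation $s=(d_h+2H)/(2d_w)$, the prefactor equals $2^{n(H+d_h/2-sd_w)}=2^0=1$, so $Y$ has the same spectral expansion in law as $X$, which proves the claim. The delicate point is precisely this bookkeeping: the identity $s=(d_h+2H)/(2d_w)$ is exactly what makes the three scaling factors cancel, and it is essential to use the inherited (not re-normalized) Hausdorff measure on $K_w$, for otherwise the factor $2^{nH}$ would not appear.
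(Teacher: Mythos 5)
Your argument is correct in substance but takes a different route from the paper's: both proofs hinge on the same two scaling facts (the Laplacian transfers as $(\Delta_w g)\circ F_w=5^n\Delta(g\circ F_w)$ and the Hausdorff measure as $\mu(F_w(A))=3^{-n}\mu(A)$) together with the arithmetic identity $5^{ns}=2^{nH}3^{n/2}$, but the paper pushes the scaling through the \emph{kernels} while you push it through the \emph{spectrum}. Concretely, the paper derives $p^w_t(F_w(x),F_w(y))=3^n p_{5^n t}(x,y)$, performs the change of variables in the time integral to get $G^w_s(F_w(x),F_w(y))=\tfrac{3^{n/2}}{2^{nH}}G_s(x,y)$, and then uses the scaling of the white noise under $F_w$ to identify the two stochastic integrals in law; this yields the covariance identity pointwise for \emph{all} $x,y$. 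You instead transfer the scaling to the eigendata, $\lambda_j^{K_w}=5^n\lambda_j$ and $\Phi_j^{K_w}\circ F_w=3^{n/2}\Phi_j$ (correctly normalized against the unnormalized restricted measure, and legitimate because $f\mapsto 3^{n/2}f\circ F_w$ is a unitary intertwining the two Dirichlet forms, so the whole spectral decomposition transfers), and match Karhunen--Lo\`eve expansions. Your version makes the cancellation $2^{nH}\cdot 3^{n/2}\cdot 5^{-ns}=1$ especially transparent and avoids the white-noise change-of-variables step. The one technical point to flag is that the spectral expansion of the field converges only in $L^2(K\times\Omega,\mu\otimes\mathbb{P})$, so as written your identity determines the law of $2^{nH}X^w(F_w(\cdot))$ as an $L^2(K,\mu)$-valued Gaussian variable, i.e.\ for $\mu$-a.e.\ $x$; to get equality of finite-dimensional distributions at every point (which is what the proposition asserts for the pointwise-defined fields) you should either note that the resulting covariances agree $\mu\otimes\mu$-a.e.\ and extend by continuity, or carry out your eigenvalue/eigenfunction computation at the level of the covariance kernels $G_{2s}$ and $G^w_{2s}$, which is essentially what the paper's kernel-based route accomplishes directly.
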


\begin{proof}
In the proof let us indicate with a superscript or subscript $w$ the objects related to the Sierpi\'nski gasket $K_w$ (Dirichlet form, heat kernel, etc...).
From the limit \eqref{dirichlet limit}, one can see that for every $f \in \mathcal{F}_w$, one has
\[
\mathcal{E}_w(f,f)=\left( \frac{5}{3} \right)^n \mathcal{E}(f \circ F_w ,f\circ F_w).
\]
Thus the relation between the Laplacian of $K_w$ and the Laplacian of $K$ is given
\[
(\Delta_w f )\circ F_w= 5^n\Delta (f \circ F_w).
\]
This yields that for the heat kernels (with respect to the reference measure $\mu$)
\[
p_t^w( F_w(x),F_w(y))=3^n p_{5^n t} (x,y).
\]
As a consequence, one has for $ x,y \in K, \, x\neq y$, 
\begin{align*}
G_s^w(F_w(x),F_w(y)) &=\frac{1}{\Gamma(s)} \int_0^{+\infty} t^{s-1} (p^w_t(F_w(x),F_w(y)) -1)dt \\
 &=\frac{3^n}{\Gamma(s)} \int_0^{+\infty} t^{s-1} (p_{5^n t} (x,y) -1)dt \\
 &=\frac{3^n}{5^{ns}\Gamma(s)} \int_0^{+\infty} t^{s-1} (p_{ t} (x,y) -1)dt .
\end{align*}
Since
\[
s=\frac{d_h}{2d_w}+\frac{H}{d_w}=\frac{1}{2} \frac{\ln 3}{\ln 5} +H \frac{\ln 2}{\ln 5}
\]
one has $5^{ns}=2^{nH} 3^{n/2}$ and therefore
\[
G_s^w(F_w(x),F_w(y)) =\frac{3^{n/2}}{2^{nH}} G_s(x,y).
\]
Notice now that if $W_{K_w}$ is a white noise on $L_0^2(K_w,\mu)$, due to the self-similarity of the Hausdorff measure $\mu$ one has for every $f \in L_0^2(K_w,\mu)$,
\begin{align*}
\mathbb{E} \left( \left(  \int_{K_w} f(z) W_{K_w}(dz)  \right)^2 \right)&=\int_{K_w} f(z)^2 d\mu(z) \\
 & =\frac{1}{3^n} \int_{K} f(F_w(z))^2 d\mu(z) \\
 &=\frac{1}{3^n} \mathbb{E} \left( \left(  \int_{K} f(F_w(z)) W_{K}(dz)  \right)^2 \right).
\end{align*}
One concludes that in distribution:
\begin{align*}
X(F_w(x))&=\int_{K_w}  G^w_s(F_w(x),z) \, W_{K_w} (dz) \\
 &=\frac{1}{3^{n/2}} \int_{K}  G^w_s(F_w(x),F_w(z)) \, W_{K} (dz) \\
 &=\frac{1}{2^{nH}} \int_{K}  G_s(x,z) \, W_{K} (dz).
\end{align*}
\end{proof}

\section{Generalization to other fractals: Barlow fractional spaces}

\begin{figure}[htb]\centering
 	\includegraphics[trim={60 10 180 60},height=0.25\textwidth]{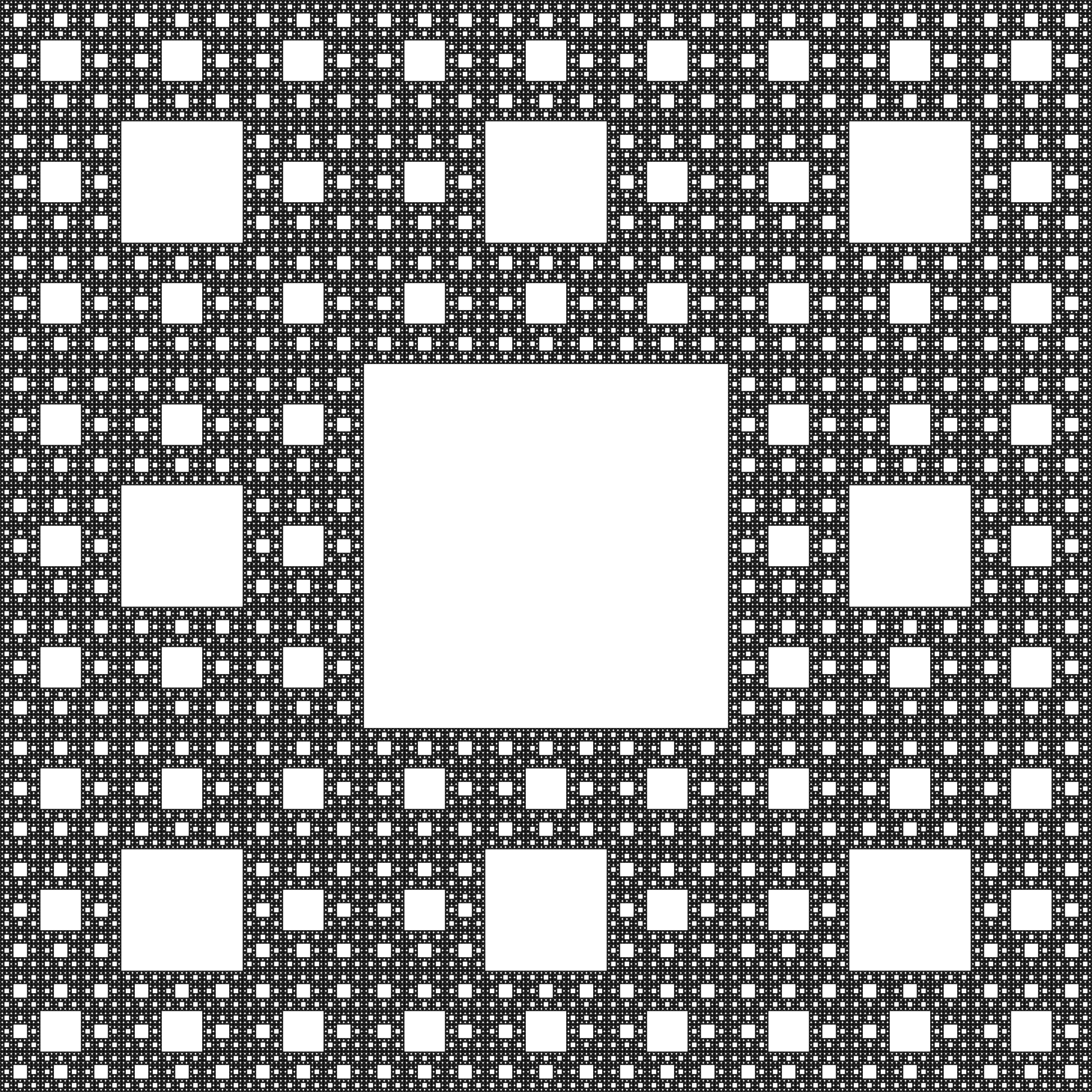}
 	\caption{Sierpi\'nski carpet.} \label{fig-SC}
 \end{figure}

Let $(K,d,\mu)$ be a compact metric  space isometrically embedded in some Euclidean space where $\mu$ is the Hausdorff measure on $K$. Let now $(\mathcal{E},\mathcal{F}=\mathbf{dom}(\mathcal{E}))$ be a strongly local regular Dirichlet form on $L^2(K,\mu)$. 

We  assume that the semigroup $\{P_t\}$ has a jointly continuous heat kernel $p_t(x,y)$ satisfying, for some $c_{1},c_{2}, c_3, c_4 \in(0,\infty)$ and $ d_h \ge 1, d_{w}\in [2,+\infty)$, $d_w \ge d_h$
 \begin{equation}\label{eq:subGauss-upper2}
 c_{1}t^{-d_{h}/d_{w}}\exp\biggl(-c_{2}\Bigl(\frac{d(x,y)^{d_{w}}}{t}\Bigr)^{\frac{1}{d_{w}-1}}\biggr) 
 \le p_{t}(x,y)\leq c_{3}t^{-d_{h}/d_{w}}\exp\biggl(-c_{4}\Bigl(\frac{d(x,y)^{d_{w}}}{t}\Bigr)^{\frac{1}{d_{w}-1}}\biggr)
 \end{equation}
 for every \ $(x,y)\in K\times K$ and  $t\in\bigl(0,1)$. 
 
 We moreover assume that metric space $(K,d)$ satisfies the midpoint property, i.e. for any $x, y \in K$ there exists $z \in K$ such that $d(x, z) = d(z, y) = \frac{1}{2} d(x, y)$. The latter is equivalent to requiring the space be geodesic.  Metric spaces satisfying the above assumptions are called fractional metric spaces and were extensively studied by Barlow  in Section 3 of the lectures \cite{Bar98}. Besides the Sierpi\'nski gasket studied previously, another popular fractal set that fits into this framework is the Sierpi\'nski carpet represented in Figure \ref{fig-SC}.
 
From  \cite{ABCRST3,BAR}, it is known that under the previous assumptions the measure $\mu$ is $d_h$-Ahlfors regular and that there exists a constant $C>0$ such that for every $f \in L^\infty(K,\mu)$, $t > 0$ and $x,y \in K$,
\begin{equation}\label{holder semigroup 2}
| P_t f(x) -P_tf(y) | \le C \frac{d(x,y)^{d_w-d_h}}{t^{1 - \frac{d_h}{d_w}}} \| f \|_{L^\infty(K,\mu)}.
\end{equation}

 For the Sierpi\'nski carpet it is known that $d_h=\frac{\log 8}{\log 3}=\frac{3\log 2}{\log 3}$ and $d_w\approx 2.097$. However, the H\"older exponent $d_w-d_h$ in \eqref{holder semigroup 2} might not be optimal and it has actually been conjectured in \cite{ABCRST3} that the best H\"older exponent in \eqref{holder semigroup 2} is $d_w-d_h +d_{tH}-1$ where $d_{tH}$ is the topological Hausdorff dimension of the carpet.
 
In this framework, the ingredients \eqref{eq:subGauss-upper2} and \eqref{holder semigroup 2} are enough to repeat the proofs of proposition \ref{estimate G} and theorem   \ref{Holder Riesz}.  The proof of theorem \ref{regularite} also extends to this setting. As a consequence one obtains the following theorem valid under the assumptions of this section.

\begin{theorem}
 Let $W$ be a white noise on $L^2_0(K,\mu)$. Then,  if $\frac{d_h}{2d_w} < s < 1-\frac{d_h}{2d_w}$, there exists a Gaussian random field $(X(x))_{x \in K}$   which is  H\"older continuous  with exponent $H^{-}$ where
\[
H= sd_w -\frac{d_h}{2},
\]
such that for every $f$ which is in the $L^2_0$ domain of the operator $(-\Delta )^s  $
\[
\int_{K} (-\Delta )^s f (x) X(x)  d\mu(x)=\int_{K}  f (x) W (dx).
\]
\end{theorem}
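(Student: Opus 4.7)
The plan is to verify that each building block of the Sierpi\'nski gasket argument of Sections 2 and 3 depends \emph{only} on the sub-Gaussian heat kernel bounds and the semigroup H\"older estimate, and then to invoke those building blocks verbatim in the Barlow fractional setting, where the same bounds \eqref{eq:subGauss-upper2} and \eqref{holder semigroup 2} are available. Concretely, I would start by defining the Riesz kernel
\[
G_s(x,y)=\frac{1}{\Gamma(s)}\int_0^{+\infty} t^{s-1}(p_t(x,y)-1)\,dt, \quad x\neq y,
\]
for $s>0$. The convergence of this integral, and the three cases of the pointwise size estimate (the analog of Proposition \ref{estimate G}), follow from \eqref{eq:subGauss-upper2} plus the bound $|p_t(x,y)-1|\le Ce^{-\lambda_1 t}$ for $t\ge 1$. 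The latter, in turn, reduces via the spectral expansion \eqref{spectral} to the sup-norm bound $|\Phi_j(x)|\le Ce^{\lambda_j/4}$, which was proved in Lemma \ref{cont:pt:tgd} using nothing more than Cauchy--Schwarz, the semigroup property, and the on-diagonal upper bound $p_{2t}(x,x)\le Ct^{-d_h/d_w}$. Since the Dirichlet form $\mathcal{E}$ is strongly local regular and $\mu$ is $d_h$-Ahlfors regular, the semigroup is compact (heat kernel upper bound plus $\mu(K)<\infty$), so a spectral expansion of the same form exists, and the argument goes through unchanged.

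Next, the $L^2$ integrability $\int_K G_s(x,y)^2 d\mu(y)\le C$ for $s>d_h/(2d_w)$ (Proposition \ref{integrability G}) uses only the pointwise bound and the dyadic annuli argument, which in turn needs only $d_h$-Ahlfors regularity; this is guaranteed in the Barlow setting. With the kernel defined and the basic estimates in hand, I would then prove the key H\"older property
\[
\int_K (G_s(x,z)-G_s(y,z))^2 d\mu(z)\le Cd(x,y)^{2sd_w-d_h}
\]
in the range $\frac{d_h}{2d_w}<s<1-\frac{d_h}{2d_w}$. This is exactly the content of Theorem \ref{Holder Riesz}, whose proof decomposes via Lemmas \ref{contPt1}--\ref{contPt3}. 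Lemma \ref{contPt1} uses only the on-diagonal heat kernel bound. Lemma \ref{contPt2}, which bootstraps the $L^\infty\to L^\infty$ H\"older estimate \eqref{holder semigroup 2} to an $L^2\to L^\infty$ H\"older estimate via the semigroup property, requires precisely \eqref{holder semigroup 2}, which is assumed in this section. Lemma \ref{contPt3} is then a routine optimization over a splitting parameter $\delta=d(x,y)^{d_w}$ and is independent of the geometry.

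With the H\"older regularity of $G_s$ established, the Gaussian field is defined as $X(x)=\int_K G_s(x,z)W(dz)$, and the identity $\int_K(-\Delta)^sf(x)X(x)d\mu(x)=\int_K f(x)W(dx)$ follows by the same spectral computation as in Proposition \ref{relation to frac}. Finally, to prove the H\"older continuity of the sample paths, I would apply the fractal Garsia--Rodemich--Rumsey inequality of Lemma \ref{BPlem} to $X$, which gives a modification $X^*$ with the stated modulus of continuity. This is the only step that needs a separate verification: Lemma \ref{BPlem} is proved in \cite{BP88} in a setting that relies only on $d_h$-Ahlfors regularity of the measure, so it transfers. The potentially subtle part is therefore not any one estimate, but checking that the eigenfunction expansion of the heat kernel and the spectral bound $|\Phi_j(x)|\le Ce^{\lambda_j/4}$ remain valid in the general Barlow framework; this follows from compactness of $P_t$ and the on-diagonal upper bound in \eqref{eq:subGauss-upper2}, so no new difficulty arises, and the statement is obtained exactly as in the gasket case.
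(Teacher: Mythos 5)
Your proposal is correct and follows essentially the same route as the paper, which likewise proves this theorem by observing that the estimates \eqref{eq:subGauss-upper2} and \eqref{holder semigroup 2}, together with $d_h$-Ahlfors regularity, are the only ingredients used in Proposition \ref{estimate G}, Theorem \ref{Holder Riesz} and Theorem \ref{regularite}, and then repeating those arguments verbatim. Your additional check that the spectral expansion and the eigenfunction bound survive in the Barlow setting (via compactness of $P_t$ from the on-diagonal upper bound) is a useful explicit verification of a point the paper leaves implicit, but it is not a different approach.
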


\bibliographystyle{amsplain}

\bibliography{fBm_Refs}

\end{document}